\numberwithin{equation}{section}
\numberwithin{figure}{section}
\newtheorem{theorem}{Theorem}[section]
\newtheorem{lemma}[theorem]{Lemma}
\newtheorem{corollary}[theorem]{Corollary}
\theoremstyle{definition}
\newtheorem{example}[theorem]{Example}
\theoremstyle{remark}
\newtheorem*{remark}{Remark}
\newcommand\shorttitle{Dimension bounds for invariant measures of bi-Lipschitz IFS}
\newcommand\authors{Andreas Anckar}
\ifodd\value{page}
\shorttitle
\authors
\begin{document}

\title{Dimension bounds for invariant measures of bi-Lipschitz iterated
function systems}

\author{Andreas Anckar\\
Åbo Akademi University, Department of Mathematics\\
Fänriksgatan 3, 20500 Turku, Finland}

\date{1.10.2015}
\maketitle
\begin{abstract}
We study probabilistic iterated function systems (IFS), consisting
of a finite or infinite number of average-contracting bi-Lipschitz
maps on $\mathbb{R}^{d}$. If our strong open set condition is also
satisfied, we show that both upper and lower bounds for the Hausdorff
and packing dimensions of the invariant measure can be found. Both
bounds take on the familiar form of ratio of entropy to the Lyapunov
exponent. Proving these bounds in this setting requires methods which
are quite different from the standard methods used for average-contracting
IFS's.
\end{abstract}

\section{Introduction}

When studying the dimension of measures for probabilistic iterated
function systems, there are two common approaches. One method is to
examine small balls centered at typical points for the Markov chain,
and estimate the logarithmic densities (\ref{eq:mu_dimH_lower_2})-(\ref{eq:mu_dimP_upper_2}).
Since this method requires some intimate understanding of the geometry
of the system, it has proven to be effective primarily in the simple
case of similitudes satisfying the so-called open set condition, which
limits the overlap of the maps. In this scenario, the exact dimensional
value of the invariant measure $\mu$ has been determined in eg. \cite{Deliu1991}
(the finite case) and later generalized in \cite{Moran1997} (the
infinite case). 

Another common approach is to assume very little of the maps and only
search for an upper bound $s$ for the dimension of $\mu$, by explicitly
constructing a set of full $\mu$-measure whose $s$-dimensional Hausdorff
measure is zero. Usually only average contractivity (a notion introduced
in \cite{Freedman1999}) is assumed. This approach has been used in
eg. \cite{Nicol2002}, \cite{Jordan2008} and \cite{Jaroszewska2008}.
While this method is easier to apply to a wider class of maps, the
drawbacks are that it does not give any lower bound for the dimension,
nor does it shed any light on the packing dimension of $\mu$. 

The aim of this paper is to obtain results for a special class of
IFS by merging these two approaches in a way, primarily by extending
techniques used for analyzing strictly contracting IFS's. We will
focus on the case where the maps are bi-Lipschitz. By assuming average
contractivity, the support of $\mu$ is not necessarily bounded and
many of the initial assumptions in the case of similitudes fail. We
will extend some ideas from the second approach to show that we can
still find lower and upper bounds for both the Hausdorff and packing
dimensions of $\mu$.

The motivation for this paper rises from the fact that lower bounds
of the dimension of $\mu$ are not commonly investigated. One example
is \cite{Myjak2002}, where $\mu$ however is required to have bounded
support. Here we will prove the intuitive result that the lower Lipschitz
condition implies a lower bound in a similar way that the upper Lipschitz
condition usually implies an upper bound. Our particular scenario
is interesting because the method we use to prove the dimension bounds
(the lower one in particular) differs from the ``standard'' technique
for similar settings.

We will define a (probabilistic) iterated function system (IFS) as
a set $\mathbb{X}\subset\mathbb{R}^{d}$ associated with a family
of maps $\mathscr{W}=\left\{ w_{i}\right\} _{i\in M}$, $w_{i}:\mathbb{X}\rightarrow\mathbb{X}$,
where the maps are chosen independently according to a probability
vector $\mathbf{p}=\left\{ p_{i}\right\} _{i\in M}$, where $p_{i}>0$
for all $i\in M$ and $\sum_{i\in M}p_{i}=1$. The index set $M$
is either finite or (countably) infinite. We will assume that the
maps are bi-Lipschitz: for every $i\in M$ there exist constants $\Gamma_{i}$
and $\gamma_{i}$, such that $0<\gamma_{i}<\Gamma_{i}$ and 
\begin{eqnarray}
\left|w_{i}(x)-w_{i}(y)\right| & \geq & \gamma_{i}\left|x-y\right|\label{eq:lower_lipschitz}\\
\left|w_{i}(x)-w_{i}(y)\right| & \leq & \Gamma_{i}\left|x-y\right|,\label{eq:upper_lipschitz}
\end{eqnarray}
for all $x,y\in\mathbb{X}$. We will use the notation $\left\{ \mathbb{X},\mathscr{W},\mathbf{p}\right\} $
for an IFS as described above.

Let $M^{\infty}=M\times M\times\ldots$ and define the infinite-fold
product probability measure on $M^{\infty}$ as $\mathbb{P}=\mathbf{p}\times\mathbf{p}\times\ldots$.
We define the mapping $\pi:M^{\infty}\rightarrow\mathbb{R}^{d}$ by
\begin{equation}
\pi\left(i_{1},i_{2},\ldots\right)=\lim_{n\rightarrow\infty}w_{i_{1}}\circ\cdots\circ w_{i_{n}}\left(x_{0}\right)\label{eq:def_pi}
\end{equation}
if the limit exists. 

For an IFS satisfying (\ref{eq:upper_lipschitz}), it is well known
(see \cite{Freedman1999}) that if there exists some $x\in\mathbb{X}$
such that the conditions

\begin{eqnarray}
 & \text{(i)} & \sum_{i\in M}p_{i}\Gamma_{i}<\infty\label{eq:diacfreed1}\\
 & \text{(ii)} & \sum_{i\in M}p_{i}\left|w_{i}\left(x\right)-x\right|<\infty\label{eq:diacfreed2}\\
 & \text{(iii)} & -\infty<\sum_{i\in M}p_{i}\log\Gamma_{i}<0\label{eq:diacfreed3}
\end{eqnarray}
hold, then there exists a unique probability measure $\mu$ on $\mathbb{X}$
such that
\[
\mu=\mathbb{P}\circ\pi^{-1}
\]
on $\mathscr{B}(\mathbb{X})$, the Borel algebra on $\mathbb{X}$.
Alternatively, $\mu$ may be defined as the unique measure satisfying
\[
\mu=\sum_{i\in M}p_{i}\mu\circ w_{i}^{-1}.
\]
The operator $\sum_{i\in M}p_{i}\mu\circ w_{i}^{-1}$ is sometimes
called the Markov\emph{ }operator, and $\mu$ is called the invariant
measure of the IFS. An IFS satisfying conditions (\ref{eq:diacfreed1})-(\ref{eq:diacfreed3})
is said to satisfy average contractivity. In this case there may be
expanding maps in $\mathscr{W}$ and thus $\pi$ does not necessarily
exist for all $\mathbf{i}\in M^{\infty}$. However, the limit (\ref{eq:def_pi})
exists $\mathbb{P}$-a.e. and does not depend on $x_{0}$. 

Another common notion related to iterated function systems is the
open set condition, which is satisfied if there exists an open set
$\mathcal{O}\subset\mathbb{X}$ such that
\begin{eqnarray}
 & \text{(i)} & w_{i}\left(\mathcal{O}\right)\subset\mathcal{O}\text{ for all }i\in M\label{eq:osc1}\\
 & \text{(ii)} & w_{i}\left(\mathcal{O}\right)\cap w_{j}\left(\mathcal{O}\right)=\emptyset\text{ for all }i\neq j.\label{eq:osc2}
\end{eqnarray}
If, in addition, 
\begin{eqnarray}
 & \text{(iii)} & \mathcal{\mu\left(O\right)}>0\label{eq:osc3}
\end{eqnarray}
we will say that the IFS satisfies the strong open set condition (SOSC).
The SOSC is a means to limit the overlapping of the maps, which simplifies
the geometry in ways essential to some of the results below. 

The set of all finite or infinite iterated function systems consisting
of average-contracting bi-Lipschitz maps that satisfy $\sum_{i\in M}p_{i}\log p_{i}>-\infty$
will be denoted $\Xi$. We now state the main result.

\begin{theorem}\label{main}Let $\mu$ denote the invariant measure
of $\left\{ \mathbb{X},\mathscr{W},\mathbf{p}\right\} \in\Xi$. If
the strong open set condition is satisfied, and $\sum_{i\in M}p_{i}\log\gamma_{i}>-\infty$,
then the Hausdorff and packing dimensions of $\mu$ satisfy 
\begin{eqnarray*}
\textup{i)} & \underline{s}\leq\dim_{H}\mu\leq\dim_{H}^{*}\mu\leq\overline{s}\\
\textup{ii)} & \underline{s}\leq\dim_{P}\mu\leq\dim_{P}^{*}\mu\leq\overline{s}
\end{eqnarray*}
where
\[
\underline{s}=\frac{\sum_{M}p_{i}\log p_{i}}{\sum_{M}p_{i}\log\gamma_{i}}\ \text{and\ }\overline{s}=\frac{\sum_{M}p_{i}\log p_{i}}{\sum_{M}p_{i}\log\Gamma_{i}}.
\]
\end{theorem}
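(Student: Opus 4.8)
The plan is to pass to the symbolic model, reduce all four inequalities to pointwise bounds on the local dimensions of $\mu$, and then produce those bounds by comparing the $\mu$-mass of a small ball with the masses of the cylinder images that meet it. For a finite word write $\mathbf i|_n=(i_1,\dots,i_n)$, $w_{\mathbf i|_n}=w_{i_1}\circ\cdots\circ w_{i_n}$, $p_{\mathbf i|_n}=p_{i_1}\cdots p_{i_n}$, and let $\gamma_{\mathbf i|_n}=\prod_{k\le n}\gamma_{i_k}$ and $\Gamma_{\mathbf i|_n}=\prod_{k\le n}\Gamma_{i_k}$ be the lower and upper Lipschitz constants of $w_{\mathbf i|_n}$. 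Set $\underline d_\mu(x)=\liminf_{r\to0}\frac{\log\mu(B(x,r))}{\log r}$ and $\overline d_\mu(x)=\limsup_{r\to0}\frac{\log\mu(B(x,r))}{\log r}$. By the density characterizations (\ref{eq:mu_dimH_lower_2})--(\ref{eq:mu_dimP_upper_2}), $\dim_H\mu$ and $\dim_H^*\mu$ equal respectively the $\mu$-essential infimum and supremum of $\underline d_\mu$, while $\dim_P\mu$ and $\dim_P^*\mu$ equal those of $\overline d_\mu$; hence both i) and ii) follow at once if I show, for $\mathbb P$-a.e.\ $\mathbf i$ and $x=\pi(\mathbf i)$, the single two-sided statement
\begin{equation*}
\underline s\ \le\ \underline d_\mu(x)\ \le\ \overline d_\mu(x)\ \le\ \overline s .
\end{equation*}
The middle inequality is automatic, and $\overline d_\mu(x)\ge\underline s$ will come for free once $\underline d_\mu(x)\ge\underline s$ is established.

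Two ingredients feed every estimate. First, under the hypotheses $\sum p_i\log p_i>-\infty$, $\sum p_i\log\gamma_i>-\infty$ and average contractivity (so that $\sum p_i\log\Gamma_i$ is finite), the strong law of large numbers gives, for $\mathbb P$-a.e.\ $\mathbf i$, the cocycle limits $\tfrac1n\log p_{\mathbf i|_n}\to\sum p_i\log p_i$, $\tfrac1n\log\gamma_{\mathbf i|_n}\to\sum p_i\log\gamma_i$ and $\tfrac1n\log\Gamma_{\mathbf i|_n}\to\sum p_i\log\Gamma_i$; in particular $\log p_{\mathbf i|_n}/\log\gamma_{\mathbf i|_n}\to\underline s$ and $\log p_{\mathbf i|_n}/\log\Gamma_{\mathbf i|_n}\to\overline s$. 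Second, because the support of $\mu$ may be unbounded, I anchor the geometry to the bounded SOSC set $\mathcal O$: since $\pi(\sigma^n\mathbf i)$ is $\mu$-distributed and $\mu(\mathcal O)>0$, the ergodic theorem shows $\pi(\sigma^n\mathbf i)\in\mathcal O$ with asymptotic frequency $\mu(\mathcal O)>0$, and a Borel--Cantelli argument (the return gaps have geometric tails) shows the return times $m_1<m_2<\cdots$ have sublinear gaps, $m_{k+1}-m_k=o(m_k)$. At a return time $n$ one has $x=w_{\mathbf i|_n}\!\big(\pi(\sigma^n\mathbf i)\big)\in w_{\mathbf i|_n}(\overline{\mathcal O})$, a piece of diameter $\le\Gamma_{\mathbf i|_n}\operatorname{diam}\mathcal O$ whose mass is $\ge p_{\mathbf i|_n}\mu(\mathcal O)$ by the iterated identity $\mu=\sum_{|\mathbf j|=n}p_{\mathbf j}\,\mu\circ w_{\mathbf j}^{-1}$ together with injectivity of the maps.

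For the upper bounds $\underline d_\mu(x),\overline d_\mu(x)\le\overline s$ I use ball-mass lower bounds coming from these return pieces. For small $r$, pick a return time $m$ with $\Gamma_{\mathbf i|_m}\operatorname{diam}\mathcal O\le r$ and $\Gamma_{\mathbf i|_m}$ only sub-exponentially smaller than $r$; such $m$ exists for every small $r$ because $\tfrac1m\log\Gamma_{\mathbf i|_m}\to\sum p_i\log\Gamma_i<0$ and the returns have sublinear gaps. Then $w_{\mathbf i|_m}(\overline{\mathcal O})\subseteq B(x,r)$ gives $\mu(B(x,r))\ge p_{\mathbf i|_m}$, whence $\frac{\log\mu(B(x,r))}{\log r}\le\frac{\log p_{\mathbf i|_m}}{\log\Gamma_{\mathbf i|_m}}+o(1)\to\overline s$. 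Since this holds at every scale it controls the full limit superior, giving $\overline d_\mu(x)\le\overline s$ (and a fortiori $\underline d_\mu(x)\le\overline s$). The sublinear-gap input is needed precisely for the packing dimension: $\dim_H^*\mu$ would already follow from the subsequence $r=\Gamma_{\mathbf i|_{m_k}}\operatorname{diam}\mathcal O$ alone, but $\dim_P^*\mu$ requires the bound at all scales.

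The crux is the lower bound $\underline d_\mu(x)\ge\underline s$, i.e.\ an upper bound $\mu(B(x,r))\le r^{\underline s-\varepsilon}$ valid at \emph{every} small scale, obtained by a stopping-time covering built from the lower Lipschitz constants: fix $r$, cut each infinite code at the first index at which the product $\gamma_{\cdot}$ reaches below $r$, giving an antichain $\Lambda_r$ of cylinders with $\mathbb P([\mathbf j])=p_{\mathbf j}$ and $\gamma_{\mathbf j}\le r$. Then $\pi^{-1}(B(x,r))$ lies in the union of those $[\mathbf j]$, $\mathbf j\in\Lambda_r$, for which $w_{\mathbf j}(\operatorname{supp}\mu)$ meets $B(x,r)$, so $\mu(B(x,r))\le\sum p_{\mathbf j}$ over these $\mathbf j$; using the open set condition the bounded pieces $w_{\mathbf j}(\mathcal O)$ are pairwise disjoint, the lower Lipschitz bound forces each to contain a ball of radius comparable to $\gamma_{\mathbf j}$, and a volume/packing estimate in $\mathbb R^d$ caps the number of relevant $\mathbf j$ by a constant independent of $r$, after which $\gamma_{\mathbf j}\approx r$ and the ratio $\log p_{\mathbf j}/\log\gamma_{\mathbf j}\to\underline s$ yield the claim. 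I expect this step to be the main obstacle, for three reasons that mark the departure from the classical similitude argument: the maps are only bi-Lipschitz, so the pieces $w_{\mathbf j}(\mathcal O)$ are distorted rather than round and the packing count must be made uniform in $r$ despite this; the support of $\mu$ is unbounded, creating a genuine tension between covering (which wants the large sets $w_{\mathbf j}(\operatorname{supp}\mu)$) and counting (which needs the bounded sets $w_{\mathbf j}(\mathcal O)$), so one must discard the mass of codes that have not recently returned to $\mathcal O$, which is where the return-frequency estimate re-enters; and in the infinite-alphabet case the constants $\gamma_i$ need not be bounded below, so a stopped cylinder may have $\gamma_{\mathbf j}\ll r$ and a piece not comparable to the ball. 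Handling this last point---by truncating the alphabet, refining the stopping rule, and absorbing the atypical cylinders into the SLLN limits so that they carry negligible mass---is where the real work of the proof will concentrate.
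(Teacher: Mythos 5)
Your reduction to pointwise local-dimension bounds and your upper-bound argument are essentially the paper's own proof (Lemma \ref{upper}): returns of the symbolic orbit to a fixed bounded positive-measure region, the mass bound (\ref{eq:mu_wi_n}), the diameter bound through $\Gamma_{\mathbf{i}|m}$, and sublinear return gaps to pass from a sequence of good scales to all scales. Two corrections there. First, $\mathcal{O}$ need not be bounded (in Example \ref{ex1} it is $(0,\infty)\times(0,1)$), so you should anchor to a ball $B(x_{0},R_{\epsilon})$ with $\mu\left(B(x_{0},R_{\epsilon})\right)>1-\epsilon$ as the paper does; this also shows the SOSC is not needed for the upper bound at all. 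Second, the sublinear-gap property should not be argued via Borel--Cantelli with ``geometric tails'': returns to a general positive-measure Borel set are neither independent nor geometrically distributed. It follows at once from Birkhoff, as in Lemma \ref{tau}: $\delta_{\mathbf{i}}(A,n)/n\rightarrow\mathbb{P}(A)$ gives $\tau_{\mathbf{i}}(A,k)/k\rightarrow1/\mathbb{P}(A)$, hence $\tau_{\mathbf{i}}(A,N)/\tau_{\mathbf{i}}(A,N-1)\rightarrow1$.

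The genuine gap is the lower bound, and it is not a technicality you can engineer around inside the stopping-time/counting framework; that framework fails structurally here. Your count requires each stopped piece $w_{\mathbf{j}}(\mathcal{O})$ meeting $B(x,r)$ to be trapped in $B(x,Cr)$, which for similitudes comes from $\mathrm{diam}\,w_{\mathbf{j}}(\mathcal{O})\approx r$. Here the only diameter control is $\Gamma_{\mathbf{j}}\,\mathrm{diam}\,\mathcal{O}$, where $\mathrm{diam}\,\mathcal{O}$ may be infinite and, even when it is finite, $\Gamma_{\mathbf{j}}/\gamma_{\mathbf{j}}=\prod_{k}\left(\Gamma_{j_{k}}/\gamma_{j_{k}}\right)$ grows exponentially in $|\mathbf{j}|$: a word stopped at $\gamma_{\mathbf{j}}\approx r$ typically has $\Gamma_{\mathbf{j}}\approx r^{\theta}$ with $\theta<1$. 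Pairwise disjoint open sets of diameter $r^{\theta}\gg r$, each containing a ball of radius $cr$ \emph{somewhere}, can meet $B(x,r)$ in arbitrarily large numbers (disjoint thin fingers reaching into the ball), so no volume/packing estimate caps the count uniformly in $r$; and since $\mathrm{supp}\,\mu$ is unbounded, the words with $w_{\mathbf{j}}(\mathrm{supp}\,\mu)\cap B(x,r)\neq\emptyset$ need not even be finite in number or of small total mass. The paper's Lemma \ref{lower} avoids counting neighbours altogether, and this is the idea your proposal is missing: fix $\mathbf{j}\in\mathcal{S}$, $K$ and $\delta>0$ with $d\left(w_{\mathbf{j}|K}(B),\partial\mathcal{O}\right)>\delta$, and use the return times $t=\tau_{\mathbf{i}}(F,\cdot)$ to $F=C_{\mathbf{j}}(K)\cap\left\{ \mathbf{k}\in\mathcal{S}:\,\pi\left(\sigma^{K}(\mathbf{k})\right)\in B\right\}$. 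At such $t$ the point $\pi\left(\sigma^{t}(\mathbf{i})\right)$ lies $\delta$-deep inside $\mathcal{O}$, and the lower Lipschitz constant pushes this depth forward to give containment of the whole ball in a \emph{single} piece, $B\left(x,\delta\prod_{\mathbf{i}|t}\gamma_{i}\right)\subseteq w_{\mathbf{i}|t}\left(B\left(\pi(\sigma^{t}(\mathbf{i})),\delta\right)\right)\subseteq w_{\mathbf{i}|t}\left(\overline{\mathcal{O}}\right)$, whose measure is exactly $\prod_{\mathbf{i}|t}p_{i}$ by (\ref{eq:mu_wi_n_sosc}), i.e.\ by Lemma \ref{overlap} ($\mu(\Theta)=0$, which is where the SOSC actually enters). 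Monotonicity of $\mu$ then bounds $\mu\left(B(x,r)\right)$ with no information whatsoever about neighbouring pieces, and Lemma \ref{tau} interpolates between consecutive return scales to control every $r$, as needed for (\ref{eq:mu_dimH_lower_2}). Note that this also dissolves your third worry about small $\gamma_{i}$ in the infinite-alphabet case: the scales are tied to return times rather than to a stopping rule on $\prod\gamma_{i}$, so no comparability between $\gamma_{\mathbf{j}}$ and $r$ beyond the defining inequalities is ever required.
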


\section{Preliminaries}

First, we introduce some notation related to the product space $M^{\infty}=M\times M\times\cdots$,
which consists of infinite sequences of elements from $M$. Here $\mathbb{N}$
will denote the set $\left\{ 1,2,3,\ldots\right\} .$ For any $\mathbf{i}=\left\{ i_{1},i_{2},\ldots\right\} \in M^{\infty}$,
define $\mathbf{i}|k=\left\{ i_{1},i_{2},\ldots,i_{k}\right\} $ and
$\mathbf{i}|j,k=\left\{ i_{j},i_{j+1},\ldots,i_{k}\right\} $ for
$j<k$. For any $k,m\in\mathbb{N}$ and any $\mathbf{i}|k\in M^{k},\,\mathbf{j}|m\in M^{m}$
we will use the operation $\mathbf{i}|k\circ\mathbf{j}|m=i_{1},\ldots,i_{k},j_{1},\ldots,j_{m}$.
By the notation $w_{\mathbf{i}|n}(x)$ we refer to the composition
$w_{i_{1}}\circ w_{i_{2}}\circ\cdots\circ w_{i_{n}}(x)$. In sums
and products we will abbreviate the indexing in the fashion $\sum_{i\in\mathbf{i}|n}p_{i}=\sum_{\mathbf{i}|n}p_{i}$.

We define the overlapping set as 
\[
\Theta=\left\{ x\in\mathbb{X}:\:\exists\mathbf{i},\mathbf{j}\in M^{\infty}\text{ such that }\mathbf{i}\neq\mathbf{j}\text{ and }x=\pi(\mathbf{i})=\pi(\mathbf{j})\right\} .
\]
Let $C_{\mathbf{j}}(n)=\left\{ \mathbf{i}\in M^{\infty}:\,\mathbf{i}|n=\mathbf{j}|n\right\} $,
where $n\in\mathbb{N}$ and $\mathbf{j}\in M^{\infty}$. A set of
this type is called a cylinder set. Cylinder sets generate a natural
topology on $M^{\infty}$ and are clopen sets. For any $\mathbf{i}\in M^{\infty}$
and $A\subset M^{\infty}$, define 
\[
\delta_{\mathbf{i}}(A,n)=\textup{card }\left(A\cap\left\{ \sigma^{k}(\mathbf{i})\right\} _{k=0}^{n-1}\right),
\]
where $\sigma$ denotes the left-shift operator on $M^{\infty}$,
ie. $\sigma\left(\left\{ i_{1},i_{2},\ldots\right\} \right)=\left\{ i_{2},i_{3},\ldots\right\} $.
Let

\[
\mathcal{S}_{0}=\left\{ \mathbf{i}\in M^{\infty}:\,A\in\mathscr{B}\left(M^{\infty}\right)\Rightarrow\lim_{n\rightarrow\infty}\frac{1}{n}\delta_{\mathbf{i}}(A,n)=\mathbb{P}(A)\right\} ,
\]
where $\mathscr{B}\left(M^{\infty}\right)$ is the Borel algebra on
$M^{\infty}$(using the aforementioned cylinder topology). Applying
Birkhoff's ergodic theorem to the probability space $\left\{ M^{\infty},\mathscr{B}\left(M^{\infty}\right),\mathbb{P}\right\} $
and the shift map $\sigma$, we have 
\begin{equation}
\mathbb{P}\left(\mathcal{S}_{0}\right)=1.\label{eq:S0full}
\end{equation}
Now, for a sequence of i.i.d random variables $X_{i}$ with expectancy
$\mathbb{E}\left(X_{i}\right)<\infty$, let 
\[
\mathcal{N}\left(X_{i}\right)=\left\{ \mathbf{i}\in M:\,\lim_{n\rightarrow\infty}\frac{1}{n}\sum_{k=1}^{n}X_{i_{k}}=\mathbb{E}\left(X_{i}\right)\right\} .
\]
We may view $X_{i}^{1}=|w_{i}(x)-x|$, $X_{i}^{2}=\log\Gamma_{i}$
and $X_{i}^{3}=\log p_{i}$ as sequences of i.i.d random variables
with distribution given by $\mathbf{p}$. In light of conditions (\ref{eq:diacfreed1})-(\ref{eq:diacfreed3}),
$\mathbb{E}\left(X_{i}^{k}\right)$ exists for $k=1,2,3$ (independently
of $x$, see \cite{Freedman1999}, corollary 5.2). We now define 
\[
\mathcal{S}=\mathcal{S}_{0}\cap\left(\bigcap_{k=1}^{3}\mathcal{N}\left(X_{i}^{k}\right)\right).
\]
By (\ref{eq:S0full}) and the strong law of large numbers, we have
$\mathbb{P}(\mathcal{S})=1$. We will denote 
\[
\mathcal{A}=\pi\left(\mathcal{S}\right).
\]
While known that $\pi$ exists for almost every $\mathbf{i}\in M^{\infty}$,
the following lemma shows specifically that $\pi$ is defined everywhere
on $\mathcal{S}$. 

\begin{lemma}\label{piexist}$\pi$ is well-defined on $\mathcal{S}$.\end{lemma}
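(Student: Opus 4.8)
The plan is to show that for each $\mathbf{i}\in\mathcal{S}$ the sequence $y_{n}=w_{\mathbf{i}|n}(x_{0})$ is Cauchy in $\mathbb{R}^{d}$, so that the defining limit (\ref{eq:def_pi}) exists; I will take $x_{0}=x$, the point used to define $X_{i}^{1}$. The natural quantity to control is the telescoping difference $|y_{n+1}-y_{n}|$. Writing $w_{\mathbf{i}|(n+1)}(x_{0})=w_{\mathbf{i}|n}\bigl(w_{i_{n+1}}(x_{0})\bigr)$ and applying the upper Lipschitz estimate (\ref{eq:upper_lipschitz}) once for each of the $n$ maps composing $w_{\mathbf{i}|n}$, I obtain
\[
|y_{n+1}-y_{n}|\le\Bigl(\prod_{k=1}^{n}\Gamma_{i_{k}}\Bigr)\,\bigl|w_{i_{n+1}}(x_{0})-x_{0}\bigr|.
\]
It then suffices to show that the right-hand side is summable in $n$.

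For the product factor I would use $\mathbf{i}\in\mathcal{N}(X_{i}^{2})$: since $\frac{1}{n}\sum_{k=1}^{n}\log\Gamma_{i_{k}}\to\mathbb{E}(X_{i}^{2})=\sum_{M}p_{i}\log\Gamma_{i}<0$ by (\ref{eq:diacfreed3}), I can fix $c>0$ with $\mathbb{E}(X_{i}^{2})<-c<0$ and conclude that $\prod_{k=1}^{n}\Gamma_{i_{k}}=\exp\bigl(\sum_{k=1}^{n}\log\Gamma_{i_{k}}\bigr)\le e^{-cn}$ for all $n$ past some index $N$, so the product decays exponentially. For the displacement factor I would use $\mathbf{i}\in\mathcal{N}(X_{i}^{1})$: the Cesàro averages $\frac{1}{n}\sum_{k=1}^{n}|w_{i_{k}}(x_{0})-x_{0}|$ converge to the finite limit $\mathbb{E}(X_{i}^{1})$, and convergence of the Cesàro means of a nonnegative sequence forces the individual terms to be $o(n)$; hence $|w_{i_{n+1}}(x_{0})-x_{0}|\le n$ for all large $n$. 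Combining the two bounds gives $|y_{n+1}-y_{n}|\le n\,e^{-cn}$ eventually, and since $\sum_{n}n\,e^{-cn}<\infty$ the sequence $(y_{n})$ is Cauchy, so $\pi(\mathbf{i})$ exists.

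The only genuinely delicate point is the displacement estimate: average contractivity controls only the average of the terms $|w_{i_{k}}(x_{0})-x_{0}|$, not the single terms, and a priori these could grow without bound along $\mathbf{i}$. The observation that rescues the argument is that convergence of the Cesàro means forces $\frac{1}{n}|w_{i_{n}}(x_{0})-x_{0}|\to0$, which is precisely the sub-exponential growth needed to be defeated by the exponentially small factor $\prod_{k=1}^{n}\Gamma_{i_{k}}$. I expect this balancing of sub-exponential displacement against exponential contraction of the product to be the heart of the proof. Note that only membership of $\mathbf{i}$ in $\mathcal{N}(X_{i}^{1})$ and $\mathcal{N}(X_{i}^{2})$ is used here; the components $\mathcal{S}_{0}$ and $\mathcal{N}(X_{i}^{3})$ of $\mathcal{S}$ play no role in establishing that the limit exists.
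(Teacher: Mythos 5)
Your proof is correct and takes essentially the same route as the paper's: both arguments telescope $w_{\mathbf{i}|n}(x)$ into increments bounded by $\bigl(\prod_{k=1}^{n}\Gamma_{i_{k}}\bigr)\,|w_{i_{n+1}}(x)-x|$, then use $\mathbf{i}\in\mathcal{N}(X_{i}^{2})$ to get exponential decay of the products and $\mathbf{i}\in\mathcal{N}(X_{i}^{1})$ to get (sub)linear growth of the displacements, exactly the balancing you identify as the heart of the matter. The only differences are organizational: the paper bounds the single term by the full Ces\`aro sum, $|w_{i_{n}}(x)-x|\leq n(\lambda_{1}+\epsilon)$, and packages the tail estimate into an explicit constant $K$ rather than invoking summability of $\sum_{n}ne^{-cn}$, which changes nothing of substance.
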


\begin{proof}

Choose $\mathbf{i}\in\mbox{\ensuremath{\mathcal{S}}}$ and $x\in\mathbb{X}$
arbitrarily. Denote $\mathbb{E}\left(\left|w_{i}(x)-x\right|\right)=\lambda_{1}$
and $\mathbb{E}\left(\log\Gamma_{i}\right)=\lambda_{2}$. Fix $c_{1}>\lambda_{1}$,
$\lambda_{2}<c_{2}<0$ and let 
\[
K=c_{1}\sum_{k=0}^{\infty}(k+2)e^{kc_{2}}.
\]
Fix $\epsilon>0$ such that $\epsilon<\min\left\{ c_{i}-\lambda_{i}\right\} _{i=1}^{2}$.
We can choose $N\geq1$ large enough that the conditions
\begin{eqnarray}
\left|w_{i_{n}}(x)-x\right|\leq\sum_{k=1}^{n}\left|w_{i_{k}}(x)-x\right| & \leq & n\left(\lambda_{1}+\epsilon\right)\\
\prod_{k=1}^{n}\Gamma_{i_{k}}=e^{\sum_{k=1}^{n}\log\Gamma_{i_{k}}} & \leq & e^{n\left(\lambda_{2}+\epsilon\right)}\\
ne^{nc_{2}}K & < & \epsilon
\end{eqnarray}
all hold for all $n\geq N$. Note that, by the triangle inequality,
we have
\begin{eqnarray*}
\left|w_{\mathbf{i}|n,m}(x)-x\right| & \leq & \left|w_{i_{n}}(x)-x\right|+\Gamma_{i_{n}}\left|w_{i_{n+1}}(x)-x\right|+\\
 &  & \ldots+\Gamma_{i_{n}}\cdots\Gamma_{i_{m-1}}\left|w_{i_{m}}(x)-x\right|
\end{eqnarray*}
for any $\mathbf{i}\in M^{\infty}$ and $m\geq n$. Now consider,
for $m\geq n>N$,
\begin{eqnarray*}
\left|w_{\mathbf{i}|m}(x)-w_{\mathbf{i}|n}(x)\right| & \leq & \prod_{k=1}^{n}\Gamma_{i_{k}}\left|w_{\mathbf{i}|n+1,m}(x)-x\right|\\
 & \leq & \sum_{j=1}^{m-n}\left(\prod_{k=1}^{n+j-1}\Gamma_{i_{k}}\left|w_{i_{n+j}}(x)-x\right|\right)\\
 & \leq & \sum_{j=1}^{m-n}e^{(n+j-1)\left(\lambda_{2}+\epsilon\right)}\left(n+j\right)\left(\lambda_{1}+\epsilon\right)\\
 & < & e^{nc_{2}}\cdot c_{1}\sum_{j=1}^{\infty}e^{(j-1)c_{2}}(n+j)\\
 & < & ne^{nc_{2}}K\\
 & < & \epsilon.
\end{eqnarray*}
Thus, the sequence $w_{\mathbf{i}|n}(x)$ is Cauchy, meaning that
$\pi(\mathbf{i})=\lim_{n\rightarrow\infty}w_{\mathbf{i}|n}(x)$ exists.
Since $x$ was chosen arbitrarily, the convergence is independent
of $x$.\end{proof}Observe that, for any integers $n\geq m$ and
$\mathbf{i},\mathbf{j}\in M^{\infty}$, $\delta_{\mathbf{i}}\left(C_{\mathbf{j}}(m),n\right)$
denotes the number of times the sequence $\mathbf{j}|m$ appears in
the sequence $\mathbf{i}|m+n-1$. Consequently, for $\mathbf{i}\in\mathcal{S}$
and $\mathbf{j}\in M^{\infty}$, we have
\begin{equation}
\lim_{n\rightarrow\infty}\frac{1}{n}\delta_{\mathbf{i}}\left(C_{\mathbf{j}}(m),n\right)=\mathbb{P}\left(C_{\mathbf{j}}(m)\right)=\prod_{\mathbf{j}|m}p_{i}.\label{eq:lln}
\end{equation}
For $\mathbf{i}\in\mathcal{S}$ and $n\in\mathbb{N}$, define
\[
\tau_{\mathbf{i}}(A,n)=\min\left\{ t:\,\sigma^{t}(\mathbf{i})\in A,\,\delta_{\mathbf{i}}(A,t)=n-1\right\} .
\]
The above expression is well-defined for every $\mathbb{P}$-positive
$A\in\mathscr{B}\left(M^{\infty}\right)$. 

Note that, for any $\mathbf{i}\in M^{\infty}$, $n\in\mathbb{N}$
and $E\subset\mathbb{X}$,
\begin{equation}
\mu\left(w_{\mathbf{i}|n}(E)\right)\geq\mathbb{P}\left\{ \mathbf{j}:\,\mathbf{j}|n=\mathbf{i}|n,\,\pi\left(j_{n+1},j_{n+2},\ldots\right)\in E\right\} =\prod_{\mathbf{i}|n}p_{i}\mu(E).\label{eq:mu_wi_n}
\end{equation}
The inequality in the above equation arises from the fact that we
may have points in $w_{\mathbf{i}|n}(E)$ not necessarily of the form
$\pi\left(i_{1},\ldots i_{n},\ldots\right)$, since there may be several
sequences in $M^{\infty}$ corresponding to the same point in $\mathbb{X}$.
However, in lemma \ref{overlap} it is shown that $\pi$ is injective
on $\mathcal{S}$ if the SOSC holds.

We now give some notation and definitions related to fractal geometry.
For any $\delta>0$, a countable (or finite) collection of sets $\left\{ U_{i}\right\} $
is a $\delta$-cover of a set $E$ if $E\subset\bigcup_{i}U_{i}$
and $\text{diam }U_{i}\leq\delta$ for all $i$. Similarly, a countable
collection of pairwise disjoint balls $\left\{ B_{i}\right\} $, with
centres in $E$ and radii at most $\delta$, is called a \emph{$\delta$}-packing
of $E$. 

For $\delta>0$ and $E\subset\mathbb{R}^{d}$ the $s$-dimensional
Hausdorff measure $\mathcal{H}^{s}$ is defined by 
\[
\mathcal{H}^{s}(E)=\liminf_{\delta\rightarrow0}\left\{ \sum_{i=1}^{\infty}\left|U_{i}\right|^{s}:\,\left\{ U_{i}\right\} \textup{ is a }\delta\textup{-cover of }E\right\} .
\]
The $s$-dimensional packing pre-measure $\mathcal{P}_{0}^{s}$ is
given by
\[
\mathcal{P}_{0}^{s}(E)=\limsup_{\delta\rightarrow0}\left\{ \sum_{i}\left|B_{i}\right|^{s}:\,\left\{ B_{i}\right\} \text{ is a }\delta\text{-packing of }E\right\} ,
\]
whereby the $s$-dimensional packing measure $\mathcal{P}^{s}$ is
defined by
\[
\mathcal{P}^{s}(E)=\inf\left\{ \sum_{i}\mathcal{P}_{0}^{s}\left(E_{i}\right):\,E\subset\bigcup_{i}E_{i}\right\} .
\]
Now we can define the Hausdorff and packing dimensions of $E$ by
\begin{eqnarray*}
\dim_{H}(E) & = & \sup\left\{ s:\,\mathcal{H}^{s}(E)=\infty\right\} =\inf\left\{ s:\,\mathcal{H}^{s}(E)=0\right\} \\
\dim_{P}(E) & = & \sup\left\{ s:\,\mathcal{P}^{s}(E)=\infty\right\} =\inf\left\{ s:\,\mathcal{P}^{s}(E)=0\right\} 
\end{eqnarray*}
We will use two common definitions of the dimension of a measure.
The lower and upper Hausdorff dimensions of a probability measure
$\mu$ are given by $\dim_{H}\mu=\inf\left\{ \dim_{H}(E):\mu(E)>0\right\} $
and $\dim_{H}^{\star}\mu=\inf\left\{ \dim_{H}(E):\mu(E)=1\right\} $,
respectively. Similarly, we define the upper and lower packing dimensions
of $\mu$ to be $\dim_{P}\mu=\inf\left\{ \dim_{P}(E):\mu(E)>0\right\} $
and $\dim_{P}^{\star}\mu=\inf\left\{ \dim_{P}(E):\mu(E)=1\right\} $.
The upper and lower local dimensions of $\mu$ at $x\in\mathbb{R}^{d}$
are given by
\[
\underline{\dim}_{\textup{loc}}\mu(x)=\liminf_{r\rightarrow0}\frac{\log\mu\left(B(x,r)\right)}{\log r}\text{ and }\overline{\dim}_{\textup{loc}}\mu(x)=\limsup_{r\rightarrow0}\frac{\log\mu\left(B(x,r)\right)}{\log r},
\]
where $B(x,r)$ is the closed ball centered around $x$ with radius
$r$. As seen in \cite{Falconer1997}, the above notions of dimension
may be equivalently defined by 
\begin{eqnarray}
\dim_{H}\mu & = & \sup\left\{ s:\,\underline{\dim}_{\textup{loc}}\mu(x)\geq s\text{ for }\mu\text{-almost all }x\right\} \label{eq:mu_dimH_lower_2}\\
\dim_{H}^{*}\mu & = & \inf\left\{ s:\,\underline{\dim}_{\textup{loc}}\mu(x)\leq s\text{ for }\mu\text{-almost all }x\right\} ,\label{eq:mu_dimH_upper_2}
\end{eqnarray}
and
\begin{eqnarray}
\dim_{P}\mu & = & \sup\left\{ s:\,\overline{\dim}_{\textup{loc}}\mu(x)\geq s\text{ for }\mu\text{-almost all }x\right\} \label{eq:mu_dimP_lower_2}\\
\dim_{P}^{*}\mu & = & \inf\left\{ s:\,\overline{\dim}_{\textup{loc}}\mu(x)\leq s\text{ for }\mu\text{-almost all }x\right\} .\label{eq:mu_dimP_upper_2}
\end{eqnarray}

\section{Results}

The following is a generalization of theorem 2.1 in \cite{Moran1998},
and is a key step in proving the lower bound in theorem \ref{main}.

\begin{lemma}\label{overlap}Let $\Theta$ be the overlapping set
for an IFS $\left\{ \mathbb{X},\mathscr{W},\mathbf{p}\right\} \in\Xi$
which also satisfies the SOSC. Then
\[
\mu\left(\Theta\right)=0.
\]
\end{lemma}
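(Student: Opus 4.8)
The plan is to show that the overlapping set $\Theta$ has zero $\mu$-measure by relating it to the exceptional set where the SOSC's open set $\mathcal{O}$ is ``missed'' by the coding map. The key observation is that the SOSC gives us a set $\mathcal{O}$ with $\mu(\mathcal{O})>0$ on which the images $w_i(\mathcal{O})$ are pairwise disjoint, so any point admitting two distinct codes must arise from sequences that eventually fail to land in the interior region governed by $\mathcal{O}$. I would first set up the standard pullback argument: if $x=\pi(\mathbf{i})=\pi(\mathbf{j})$ with $\mathbf{i}\neq\mathbf{j}$, let $n$ be the first index where $i_n\neq j_n$; then $x\in w_{\mathbf{i}|n-1}(w_{i_n}(\mathbb{X}))\cap w_{\mathbf{i}|n-1}(w_{j_n}(\mathbb{X}))$, and applying the bi-Lipschitz inverse of $w_{\mathbf{i}|n-1}$ places the two tails into $w_{i_n}(\mathcal{O})$ and $w_{j_n}(\mathcal{O})$ respectively only if both tails code points of $\mathcal{O}$.

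The core of the proof is to define the ``bad'' event $B=\{x : \pi^{-1}(x)\text{ lands outside }\mathcal{O}\text{ at infinitely many levels}\}$ and show $\Theta$ is contained (up to measure zero) in the set of points whose codings never enter $\mathcal{O}$. Using the Markov-operator identity $\mu=\sum_i p_i\,\mu\circ w_i^{-1}$ together with the disjointness (\ref{eq:osc2}) and invariance (\ref{eq:osc1}) of $\mathcal{O}$, I would show that the measure of the set of points whose first $n$ coordinates all avoid $\mathcal{O}$ decays, forcing $\mu$-a.e. point to visit $\mathcal{O}$ along its orbit. Concretely, since $\mu(\mathcal{O})>0$, the ergodicity encoded in $\mathcal{S}_0$ (full-measure set of sequences whose orbits equidistribute, equation (\ref{eq:S0full})) guarantees that $\mathbb{P}$-a.e. sequence $\mathbf{i}$ has $\sigma^k(\mathbf{i})\in\pi^{-1}(\mathcal{O})$ for a positive-density set of $k$; I would transfer this to $\mu$ via $\mu=\mathbb{P}\circ\pi^{-1}$.

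Next I would combine this recurrence into $\mathcal{O}$ with the disjointness to rule out double coding. The point is that if $\sigma^k(\mathbf{i})$ and $\sigma^k(\mathbf{j})$ both code the \emph{same} point $w_{\mathbf{i}|k}^{-1}(x)$ and this point lies in $\mathcal{O}$, then because $w_{i_{k+1}}(\mathcal{O})\cap w_{j_{k+1}}(\mathcal{O})=\emptyset$ whenever $i_{k+1}\neq j_{k+1}$, the two codes must agree at coordinate $k+1$. Pushing this forward to all large $k$ (using that $\mathbf{i}\in\mathcal{S}\subset\mathcal{S}_0$ returns to $\pi^{-1}(\mathcal{O})$ infinitely often) forces $\mathbf{i}$ and $\mathbf{j}$ to agree on a tail, and then injectivity is immediate since agreeing from some point onward together with matching finite prefixes means the sequences coincide. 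Thus a point with two genuinely distinct codes must have at least one code that avoids $\pi^{-1}(\mathcal{O})$ entirely on a tail, an event of $\mathbb{P}$-measure zero.

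The main obstacle is handling the fact that $\mathbb{X}$ need not be bounded and the maps may be expanding, so that the usual compactness and uniform-contraction arguments from the similitude case are unavailable. In particular, I expect the delicate step to be justifying that the bi-Lipschitz inverse $w_{\mathbf{i}|k}^{-1}$ applied to a neighborhood of $x$ behaves well enough to preserve membership in $\mathcal{O}$ — here the lower Lipschitz bound (\ref{eq:lower_lipschitz}) and the average-contraction estimates from Lemma \ref{piexist} (controlling $\prod_k\Gamma_{i_k}$) must be invoked carefully, together with the hypothesis $\sum_M p_i\log\gamma_i>-\infty$, to ensure that the return-to-$\mathcal{O}$ structure along a $\mathbb{P}$-generic sequence is geometrically meaningful rather than washed out by distortion.
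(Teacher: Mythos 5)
Your overall strategy --- pull back to the first symbol of disagreement, use the SOSC disjointness, and derive a contradiction with Birkhoff genericity of visits to $\pi^{-1}(\mathcal{O})$ --- is essentially the paper's strategy in contrapositive form, but the central deduction as you state it is incorrect. You claim that if $\sigma^{k}(\mathbf{i})$ and $\sigma^{k}(\mathbf{j})$ code the same point \emph{and this point lies in} $\mathcal{O}$, then (\ref{eq:osc2}) forces $i_{k+1}=j_{k+1}$. That is a misapplication of (\ref{eq:osc2}): the disjointness of $w_{i_{k+1}}(\mathcal{O})$ and $w_{j_{k+1}}(\mathcal{O})$ constrains the common point only if it lies in \emph{both images}, i.e.\ only if the next-level points $\pi(\sigma^{k+1}(\mathbf{i}))$ and $\pi(\sigma^{k+1}(\mathbf{j}))$ both lie in $\mathcal{O}$; whether the level-$k$ point itself is in $\mathcal{O}$ is irrelevant. (Your own first paragraph states the correct caveat, but the core argument then drops it.) The missing ingredient is the forward invariance (\ref{eq:osc1}), which is precisely the pivot of the paper's proof: since $w_{i}(\mathcal{O})\subset\mathcal{O}$, a code whose orbit enters $\pi^{-1}(\mathcal{O})$ at arbitrarily deep levels has \emph{every} level point in $\mathcal{O}$ --- equivalently, once one level point falls outside $\mathcal{O}$, the entire deeper tail stays outside. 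You invoke (\ref{eq:osc1}) only in the (unnecessary) Markov-operator decay discussion, not where it is needed. A second structural error: the ``agree on a tail'' logic cannot close the argument. Beyond the first disagreement the shifted sequences $\sigma^{m}(\mathbf{i})$ and $\sigma^{m}(\mathbf{j})$ no longer code the same point, so the premise of your implication is unavailable there; and in any case prefix agreement plus tail agreement would not give $\mathbf{i}=\mathbf{j}$ (the sequences could still differ at the single position in between). The induction must run coordinate-by-coordinate from the start: same point at level $m$ and both level-$(m+1)$ points in $\mathcal{O}$ give $i_{m+1}=j_{m+1}$ by (\ref{eq:osc2}), hence the same point at level $m+1$ by injectivity of the maps, which is all that (\ref{eq:lower_lipschitz}) is used for.

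For comparison, the paper argues the implication in the other direction: at the first disagreement, (\ref{eq:osc2}) forces at least one of the two next-level points outside $\mathcal{O}$; (\ref{eq:osc1}) then confines that code's whole deeper orbit to $\mathcal{A}\setminus\mathcal{O}$; since that code is Birkhoff-generic, its orbit $\{\pi\circ\sigma^{n}(\mathbf{i})\}$ is dense in $\mathcal{A}\cap E$, where $E$ is the set of points all of whose neighbourhoods have positive $\mu$-measure, and a dense orbit avoiding the open set $\mathcal{O}$ yields $\mu(\mathcal{O})=0$, contradicting (\ref{eq:osc3}). Your idea of contradicting the positive Birkhoff frequency of visits to $\pi^{-1}(\mathcal{O})$ directly is a legitimate (and arguably cleaner) substitute for that density step, so with the two repairs above your proof would go through along the paper's lines. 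Finally, the worries in your last paragraph are misplaced: no control of $w_{\mathbf{i}|k}^{-1}$ on neighbourhoods, no distortion estimates, and no use of the hypothesis $\sum_{M}p_{i}\log\gamma_{i}>-\infty$ are needed here --- that hypothesis belongs to Lemma \ref{lower} --- because the whole argument is set-theoretic and dynamical rather than metric.
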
\begin{proof}Let $\pi_{n}(\mathbf{i)}=\pi\circ\sigma^{n-1}(\mathbf{i})$
and define the orbit of $\mathbf{i}\in\mathcal{S}$ by
\[
O(\mathbf{i})=\left\{ \pi_{n}(\mathbf{i})\right\} _{n=1}^{\infty}.
\]
By lemma \ref{piexist}, the above orbit is well-defined. Also, note
that the symbolic orbit $\left\{ \sigma^{n}\left(\mathbf{i}\right)\right\} _{n=0}^{\infty}$
is dense in $M^{\infty}$.

Suppose there exist $\mathbf{i},\mathbf{j}\in\mathcal{S}$ such that
$\pi(\mathbf{i})=\pi(\mathbf{j})$ and $\mathbf{i}\neq\mathbf{j}$.
Let $k=\min\left\{ n:\,i_{n}\neq j_{n}\right\} $, ie. $k$ is the
first position where $\mathbf{i}$ and \textbf{$\mathbf{j}$} differ.
By (\ref{eq:lower_lipschitz}) the maps $\left\{ w_{i}\right\} _{i\in M}$
are injective, so $\pi_{k}(\mathbf{i})=\pi_{k}(\mathbf{j})$. Furthermore,
$\pi_{k}(\mathbf{i})\in w_{i_{k}}\left(\mathcal{A}\right)$ and $\pi_{k}(\mathbf{j})\in w_{j_{k}}\left(\mathcal{A}\right)$,
so by (\ref{eq:osc2}), at least one of $\pi_{k}(\mathbf{i})\in w_{i_{k}}\left(\mathcal{A}\setminus\mathcal{O}\right)$
and $\pi_{k}(\mathbf{j})\in w_{j_{k}}\left(\mathcal{A}\setminus\mathcal{O}\right)$
must hold. Assuming $\pi_{k}(\mathbf{i})\in w_{i_{k}}\left(\mathcal{A}\setminus\mathcal{O}\right)$,
we have $\pi_{k+1}\left(\mathbf{i}\right)\in\mathcal{A}\setminus\mathcal{O}$,
from which it follows that $\pi_{k+m}(\mathbf{i})\in\mathcal{A}\setminus\mathcal{O}$
for every $m\geq1$, since if $\pi_{k+m}(\mathbf{i})\in\mathcal{O}$
for some $m\geq1$, then by (\ref{eq:osc1}) we have $\pi_{k+1}(\mathbf{i})\in\mathcal{O}$,
which is a contradiction. 

Now, let
\[
E=\left\{ x\in\mathbb{X}:\:\mu\left(B(x,r)\right)>0\;\forall r>0\right\} 
\]
and note that $\mu\left(\mathcal{A}\cap E\right)=1$ since its complement
is clearly a null set. By definition of $\mathcal{S}$, for every
$x\in\mathcal{A}\cap E$ and $\delta>0$ we can find a $k>0$ such
that $\pi_{k}(\mathbf{i})\in B(x,\delta)$, since $\mathbb{P}\left(\pi^{-1}\left(B(x,\delta)\right)\right)>0$.
Thus the orbit $O\left(\mathbf{i}\right)\subset\mathcal{A}\setminus\mathcal{O}$
is dense in $\mathcal{A}\cap E$, which implies that $\left(\mathcal{A}\cap E\right)\cap\mathcal{O}=\emptyset$
and consequently that $\mu\left(\mathcal{O}\right)=0$, contradicting
(\ref{eq:osc3}). Thus $\mathcal{A}\cap\Theta=\emptyset$ and the
proposition follows.\end{proof}An immediate corollary is that if
the SOSC is satisfied, then for any $\mathbf{i}\in M^{\infty}$ and
$n\in\mathbb{N}$ we have 
\begin{equation}
\mu\left(w_{\mathbf{i}|n}\left(\mathcal{A}\right)\right)=\mu\left(w_{\mathbf{i}|n}\left(\overline{\mathcal{O}}\right)\right)=\prod_{\mathbf{i}|n}p_{i}.\label{eq:mu_wi_n_sosc}
\end{equation}
\begin{lemma}\label{tau}Let $\mathbf{i}\in\mathcal{S}$. For any
$A\in\mathscr{B}\left(M^{\infty}\right)$ with $\mathbb{P}(A)>0$
we have 
\[
\lim_{n\rightarrow\infty}\frac{\tau_{\mathbf{i}}(A,n)}{\tau_{\mathbf{i}}(A,n-1)}=1.
\]
\end{lemma}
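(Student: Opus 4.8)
The plan is to reinterpret $\tau_{\mathbf{i}}(A,n)$ concretely. Unwinding the definition, $\tau_{\mathbf{i}}(A,n)$ is the least $t$ for which $\sigma^t(\mathbf{i})\in A$ while exactly $n-1$ of the earlier iterates $\sigma^0(\mathbf{i}),\dots,\sigma^{t-1}(\mathbf{i})$ lie in $A$; that is, it is the time of the $n$-th visit of the orbit $\{\sigma^k(\mathbf{i})\}_{k\geq0}$ to $A$. Writing $t_n=\tau_{\mathbf{i}}(A,n)$, I would record the two facts that drive everything: the $t_n$ are strictly increasing, and at the moment of the $n$-th visit there have been precisely $n-1$ strictly earlier visits, so that $\delta_{\mathbf{i}}(A,t_n)=n-1$.

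Next I would bring in the ergodic average. Since $\mathbf{i}\in\mathcal{S}\subseteq\mathcal{S}_0$, the defining property of $\mathcal{S}_0$ gives $\tfrac1m\delta_{\mathbf{i}}(A,m)\to\mathbb{P}(A)$ as $m\to\infty$. As $\mathbb{P}(A)>0$, the orbit meets $A$ infinitely often, so $t_n\to\infty$; in particular the denominators below are eventually positive and $\tau_{\mathbf{i}}(A,n-1)$ is well defined for large $n$. Substituting $m=t_n$ into the average and using $\delta_{\mathbf{i}}(A,t_n)=n-1$ yields $\tfrac{n-1}{t_n}\to\mathbb{P}(A)$, i.e.
\[
\frac{t_n}{n}\longrightarrow\frac{1}{\mathbb{P}(A)}=:L,
\]
a finite, strictly positive constant. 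The same limit holds with $n$ replaced by $n-1$.

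Finally I would conclude by a short algebraic factorisation:
\[
\frac{\tau_{\mathbf{i}}(A,n)}{\tau_{\mathbf{i}}(A,n-1)}=\frac{t_n}{t_{n-1}}=\frac{t_n/n}{t_{n-1}/(n-1)}\cdot\frac{n}{n-1}\longrightarrow\frac{L}{L}\cdot1=1,
\]
since both $t_n/n$ and $t_{n-1}/(n-1)$ tend to $L>0$ while $\tfrac{n}{n-1}\to1$. There is no genuine obstacle here: the entire analytic content is the Birkhoff/strong-law average already built into membership in $\mathcal{S}_0$, and the only point requiring a little care is the bookkeeping in the first paragraph—correctly identifying $\tau_{\mathbf{i}}(A,n)$ as the $n$-th hitting time and confirming $\delta_{\mathbf{i}}(A,t_n)=n-1$—together with the observation that $\mathbb{P}(A)>0$ forces $t_n\to\infty$, so that no division by zero occurs.
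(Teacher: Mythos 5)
Your proposal is correct and is essentially the paper's own argument: the paper likewise notes that membership in $\mathcal{S}_{0}$ gives $\mathbb{P}(A)=\lim_{n\rightarrow\infty}\delta_{\mathbf{i}}(A,n)/n=\lim_{k\rightarrow\infty}k/\tau_{\mathbf{i}}(A,k)$ and concludes from there; you have simply filled in the bookkeeping (identifying $\tau_{\mathbf{i}}(A,n)$ as the $n$-th hitting time and checking $t_{n}\rightarrow\infty$) that the paper leaves implicit.
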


\begin{proof}Fix $\mathbf{i}\in\mathcal{S}$ and $A\in\mathscr{B}\left(M^{\infty}\right)$
(with $\mathbb{P}(A)>0$). The result follows immediately upon observing
that
\[
\mathbb{P}(A)=\lim_{n\rightarrow\infty}\frac{\delta_{\mathbf{i}}(A,n)}{n}=\lim_{k\rightarrow\infty}\frac{k}{\tau_{\mathbf{i}}(A,k)}.
\]
\end{proof}The proof of theorem \ref{main} is now given by the following
two lemmas.

\begin{lemma}\label{upper}Let $\mu$ be the invariant measure of
$\left\{ \mathbb{X},\mathcal{\mathscr{W}},\mathbf{p}\right\} \in\Xi$.
Then 
\[
\dim_{P}^{*}\mu\leq\frac{\sum_{i}p_{i}\log p_{i}}{\sum_{i}p_{i}\log\Gamma_{i}}.
\]
\end{lemma}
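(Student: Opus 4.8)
The plan is to control the upper packing dimension through the upper local dimension. By the characterisation (\ref{eq:mu_dimP_upper_2}) it suffices to prove that $\overline{\dim}_{\textup{loc}}\mu(x)\le\overline{s}$ for $\mu$-almost every $x$, and since $\mu(\mathcal{A})=1$ I may take $x=\pi(\mathbf{i})$ for some $\mathbf{i}\in\mathcal{S}$ and argue through this coding. The engine of the estimate is (\ref{eq:mu_wi_n}): for any $E\subset\mathbb{X}$ one has $\mu(w_{\mathbf{i}|n}(E))\ge\big(\prod_{\mathbf{i}|n}p_i\big)\mu(E)$. Using $x=w_{\mathbf{i}|n}(\pi(\sigma^{n}\mathbf{i}))$ (by continuity of $w_{\mathbf{i}|n}$ and the definition of $\pi$) together with the upper Lipschitz bound (\ref{eq:upper_lipschitz}), the image under $w_{\mathbf{i}|n}$ of a ball $E$ centred at $\pi(\sigma^{n}\mathbf{i})$ lands inside a ball about $x$ whose radius is $\prod_{k=1}^{n}\Gamma_{i_k}$ times that of $E$. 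This gives, at the discrete scales $r_n:=2R\prod_{k=1}^{n}\Gamma_{i_k}$, a lower bound of the form $\mu(B(x,r_n))\ge\big(\prod_{k=1}^{n}p_{i_k}\big)\mu(E)$, and since $\mathbf{i}\in\mathcal{S}$ the Birkhoff/strong law averages turn the two products into $e^{n\sum_i p_i\log\Gamma_i+o(n)}$ and $e^{n\sum_i p_i\log p_i+o(n)}$, so that $\log\mu(B(x,r_n))/\log r_n\to\overline{s}$.

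The main obstacle is that under mere average contractivity the support of $\mu$ need not be bounded, so the coding point $\pi(\sigma^{n}\mathbf{i})$ may escape to infinity and the factor $\mu(E)$ cannot be kept uniformly positive if $E$ is a fixed ball. I would overcome this not by a growth estimate but by recurrence: fix a ball $B_R$ of radius $R$ with $c:=\mu(B_R)>0$ (possible because $\mu$ is a probability measure), and set $A=\pi^{-1}(B_R)$, so $\mathbb{P}(A)=c>0$. Whenever $n$ is a return time, i.e.\ $\sigma^{n}\mathbf{i}\in A$, the point $\pi(\sigma^{n}\mathbf{i})$ lies within $R$ of the centre of $B_R$, hence $B(\pi(\sigma^{n}\mathbf{i}),2R)\supseteq B_R$ and the factor $\mu(E)$ above is at least $c$. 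Thus along the return times the bound $\mu(B(x,r_n))\ge c\prod_{k=1}^{n}p_{i_k}$ is available with a constant $c$ that is harmless in the logarithmic ratio.

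It remains to pass from the discrete scales $r_n$ (for $n$ a return time) to all small $r$, which is exactly where Lemma \ref{tau} is used. For small $r$ I take $n=\tau_{\mathbf{i}}(A,j)$ to be the smallest return time with $r_n\le r$; then the preceding return time satisfies $r_{\tau_{\mathbf{i}}(A,j-1)}>r$, and combining these two inequalities with the monotonicity of $r\mapsto\mu(B(x,r))$ sandwiches $\log\mu(B(x,r))/\log r$ between quantities built from $\tau_{\mathbf{i}}(A,j)$ and $\tau_{\mathbf{i}}(A,j-1)$. One checks that $j\to\infty$ as $r\to0$ (only finitely many return times satisfy $r_n\le r$ below any fixed threshold), so Lemma \ref{tau} forces $\tau_{\mathbf{i}}(A,j)/\tau_{\mathbf{i}}(A,j-1)\to1$, and the averages along $\mathbf{i}\in\mathcal{S}$ then drive both ends of the sandwich to $\overline{s}$. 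This yields $\overline{\dim}_{\textup{loc}}\mu(x)\le\overline{s}$ for $\mu$-almost every $x$, hence $\dim_P^{*}\mu\le\overline{s}$. Beyond the unboundedness issue, the only delicate bookkeeping is keeping track of inequality directions, since all the logarithms involved are negative.
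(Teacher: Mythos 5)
Your proposal is correct and takes essentially the same route as the paper: discrete scales indexed by return times to the preimage of a fixed positive-measure ball, inequality (\ref{eq:mu_wi_n}) to bound $\mu\left(B(x,r)\right)$ from below, Lemma \ref{tau} to compare consecutive return times, and the Birkhoff averages along $\mathcal{S}$ to extract $\overline{s}$. The only (cosmetic) difference is that by centering the comparison ball at $\pi\left(\sigma^{n}(\mathbf{i})\right)$ you never need $x$ itself to lie in the reference ball, which lets you skip the paper's closing exhaustion over the increasing balls $E_{n}=B\left(x_{0},R_{\epsilon/n}\right)\cap\mathcal{A}$.
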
\begin{proof}

First note that, for any $\mathbf{i}\in\mathcal{S}$, we have 
\[
\lim_{n\rightarrow\infty}\frac{1}{n}\log\prod_{\mathbf{i}|n}\Gamma_{i}=\lim_{n\rightarrow\infty}\frac{1}{n}\sum_{\mathbf{i}|n}\log\Gamma_{i}=\sum_{M}p_{i}\log\Gamma_{i},
\]
Thus 
\begin{equation}
\lim_{n\rightarrow\infty}\prod_{\mathbf{i}|n}\Gamma_{i}=0,\label{eq:prodG}
\end{equation}
by condition (\ref{eq:diacfreed3}). Now fix $x_{0}\in\mathbb{X}$,
$\epsilon>0$ and choose $R_{\epsilon}$ such that $\mu\left(B\left(x_{0},R_{\epsilon}\right)\right)>1-\epsilon$.
Denote the ball $B\left(x_{0},R_{\epsilon}\right)$ by $B$, and let
$B^{\prime}=\pi^{-1}(B)$. Since $\pi$ is measurable, $B^{\prime}$
is Borel, something which can also be seen by writing

\[
\pi^{-1}\left(B\right)=\bigcap_{K=1}^{\infty}\bigcup_{N=1}^{\infty}\bigcap_{n=N}^{\infty}\left\{ \mathbf{i}\in M^{\infty}:\,\inf_{x\in B}d\left(w_{i_{1}}\circ\cdots\circ w_{i_{n}}\left(x_{0}\right),x\right)<\frac{1}{K}\right\} ,
\]
where $x_{0}\in\mathbb{X}$ is arbitrary. The set within the brackets
above is open (since it is a countable union of cylinder sets) implying
$\pi^{-1}(B)\in\mathscr{B}\left(M^{\infty}\right)$. Now, choose $x=\pi(\mathbf{i})\in B\cap\mathcal{A}$
and fix $r>0$. Let

\[
N=\min\left\{ k:\,\prod_{\mathbf{i}|\tau_{\mathbf{i}}\left(B^{\prime},k\right)}\Gamma_{i}<\frac{r}{2R_{\epsilon}}\leq\prod_{\mathbf{i}|\tau_{\mathbf{i}}\left(B^{\prime},k-1\right)}\Gamma_{i}\right\} .
\]
By (\ref{eq:prodG}), the above constant exists, provided that $r$
is chosen small enough. Note that, by the definition of $\tau\mathbf{_{i}}\left(B^{\prime},k\right)$,
we have $x\in w_{\mathbf{i}|\tau\mathbf{_{i}}\left(B^{\prime},k\right)}(B)$
for all $k\geq1$. Since $\left|w_{\mathbf{i}|n}(B)\right|\leq\prod_{\mathbf{i}|n}\Gamma_{i}2R_{\epsilon}$
(here $\left|\cdot\right|$ denotes the diameter of a set), the set
$w_{\mathbf{i}|\tau_{\mathbf{i}}\left(B^{\prime},N\right)}(B)$ is
certainly included in the ball $B(x,r)$. Combining this with (\ref{eq:mu_wi_n})
yields

\begin{eqnarray*}
\frac{\log\mu\left(B(x,r)\right)}{\log r} & \leq & \frac{\log\mu\left(w_{\mathbf{i}|\tau_{\mathbf{i}}\left(B^{\prime},N\right)}(B)\right)}{\log\left(2R_{\epsilon}\prod_{\mathbf{i}|\tau_{\mathbf{i}}\left(B^{\prime},N-1\right)}\Gamma_{i}\right)}\\
 & \leq & \frac{\log\mu(B)+\log\prod_{\mathbf{i}|\tau_{\mathbf{i}}\left(B^{\prime},N\right)}p_{i}}{\log2R_{\epsilon}+\log\prod_{\mathbf{i}|\tau_{\mathbf{i}}\left(B^{\prime},N-1\right)}\Gamma_{i}}.
\end{eqnarray*}
Applying lemma \ref{tau}, we get

\[
\begin{aligned}\limsup\limits _{r\rightarrow0} & \frac{\log\mu\left(B(x,r)\right)}{\log r}\leq\\
 & \limsup_{N\rightarrow\infty}\frac{\tau_{\mathbf{i}}\left(B^{\prime},N\right)}{\tau_{\mathbf{i}}\left(B^{\prime},N-1\right)}\cdot\frac{\frac{1}{\tau_{\mathbf{i}}\left(B^{\prime},N\right)}\left(\log\mu(B)+\log\prod_{\mathbf{i}|\tau_{\mathbf{i}}\left(B^{\prime},N\right)}p_{i}\right)}{\frac{1}{\tau_{\mathbf{i}}\left(B^{\prime},N-1\right)}\left(\log2R_{\epsilon}+\log\prod_{\mathbf{i}|\tau_{\mathbf{i}}\left(B^{\prime},N-1\right)}\Gamma_{i}\right)}\\
 & =\frac{\sum_{M}p_{i}\log p_{i}}{\sum_{M}p_{i}\log\Gamma_{i}}
\end{aligned}
\]
from which we acquire $\overline{\dim}_{\textup{loc}}\mu\left(x\right)\leq\overline{s}$
for all $x\in\mathcal{A}\cap B$. Now let 
\[
E_{n}=B\left(x_{0},R_{\epsilon/n}\right)\cap\mathcal{A},
\]
where $\left\{ R_{\epsilon/n}\right\} _{n=1}^{\infty}$ is chosen
to be an increasing sequence such that 
\[
\mu\left(B\left(x_{0},R_{\epsilon/n}\right)\right)>1-\frac{\epsilon}{n}.
\]
For every $n\geq1$, the above argument can be repeated so that $\overline{\dim}_{\text{loc}}\mu(x)\leq\underline{s}$
for all $x\in E_{n}$. Since $\mu\left(\bigcup_{n\geq1}E_{n}\right)=\mu\left(\mathcal{A}\right)=1$,
the result follows from (\ref{eq:mu_dimP_upper_2}). \end{proof}

\begin{remark}Note that the SOSC need not be assumed in the above
lemma.\end{remark}

\begin{lemma}\label{lower}Let $\mu$ be the invariant measure of
$\left\{ \mathbb{X},\mathcal{\mathscr{W}},\mathbf{p}\right\} \in\Xi$
and assume that the SOSC holds. If $\sum_{i\in M}p_{i}\log\gamma_{i}>-\infty$,
then
\[
\dim_{H}\mu\geq\frac{\sum_{i}p_{i}\log p_{i}}{\sum_{i}p_{i}\log\gamma_{i}}.
\]
\end{lemma}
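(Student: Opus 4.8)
The plan is to mirror the proof of Lemma \ref{upper}, but with every inequality reversed: instead of inscribing a contracted copy $w_{\mathbf i|n}(B)$ inside $B(x,r)$ to bound $\mu(B(x,r))$ from below, I will circumscribe $B(x,r)$ by a single cylinder image $w_{\mathbf i|m}(\overline{\mathcal{O}})$ to bound $\mu(B(x,r))$ from above. Once I show that $\underline{\dim}_{\textup{loc}}\mu(x)\ge\underline{s}$ for $\mu$-almost every $x$, the conclusion follows from (\ref{eq:mu_dimH_lower_2}). As a preliminary bookkeeping step I enlarge the full-measure set $\mathcal{S}$ so that it also lies in $\mathcal{N}(\log\gamma_i)$; this is legitimate because the hypothesis $\sum_i p_i\log\gamma_i>-\infty$ makes $\mathbb{E}(\log\gamma_i)$ finite, so the strong law of large numbers gives this set full measure. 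On it $\tfrac1n\sum_{\mathbf i|n}\log\gamma_i\to\sum_i p_i\log\gamma_i$, and since $\gamma_i<\Gamma_i$ forces $\sum_i p_i\log\gamma_i<\sum_i p_i\log\Gamma_i<0$ by (\ref{eq:diacfreed3}), the products satisfy $\prod_{\mathbf i|n}\gamma_i\to0$, in exact analogy with (\ref{eq:prodG}).

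The geometric heart of the argument is a containment claim. Because $\mathcal{O}$ is open with $\mu(\mathcal{O})>0$, the $\rho$-interiors $\mathcal{O}_\rho=\{y:\overline{B(y,\rho)}\subset\mathcal{O}\}$ increase to $\mathcal{O}$, so I may fix $\rho>0$ with $\mu(\mathcal{O}_\rho)>0$. Put $A=\pi^{-1}(\mathcal{O}_\rho)$; then $\mathbb{P}(A)=\mu(\mathcal{O}_\rho)>0$, so the return times $\tau_{\mathbf i}(A,\cdot)$ are defined for $\mathbf i\in\mathcal{S}$ and Lemma \ref{tau} applies. Given a small $r>0$, I let $m=\tau_{\mathbf i}(A,N)$ be the last return time for which $\prod_{\mathbf i|m}\gamma_i\ge r/\rho$; this exists and $m\to\infty$ as $r\to0$ since $\prod_{\mathbf i|n}\gamma_i\to0$. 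By construction $\sigma^m(\mathbf i)\in A$, i.e. $x_m:=\pi(\sigma^m(\mathbf i))\in\mathcal{O}_\rho$, so $B(x_m,\rho)\subset\mathcal{O}$. The claim is then that $B(x,r)\subset w_{\mathbf i|m}(\mathcal{O})$, whence (\ref{eq:mu_wi_n_sosc}) gives $\mu(B(x,r))\le\mu(w_{\mathbf i|m}(\overline{\mathcal{O}}))=\prod_{\mathbf i|m}p_i$.

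To see the claim, note that $w_{\mathbf i|m}$ is injective by (\ref{eq:lower_lipschitz}) and its inverse on the image is $(\prod_{\mathbf i|m}\gamma_i)^{-1}$-Lipschitz, so one expects $w_{\mathbf i|m}(B(x_m,\rho))\supset B(x,\rho\prod_{\mathbf i|m}\gamma_i)\supset B(x,r)$, the last inclusion by the choice of $m$; since $B(x_m,\rho)\subset\mathcal{O}$, this would give $B(x,r)\subset w_{\mathbf i|m}(\mathcal{O})$. I expect this ``image of a ball contains a ball'' step to be the main obstacle: the pointwise bound (\ref{eq:lower_lipschitz}) only guarantees that $w_{\mathbf i|m}^{-1}$ does not expand, whereas I need the image of $B(x_m,\rho)$ to genuinely contain a Euclidean ball around $x$. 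Upgrading the co-Lipschitz estimate to actual ball-containment requires that $w_{\mathbf i|m}(\mathcal{O})$ be open, which I will obtain from invariance of domain applied to the injective continuous map $w_{\mathbf i|m}$ on $\mathbb{R}^d$; the role of the return times to the $\rho$-interior $\mathcal{O}_\rho$ is precisely to provide the fixed depth $\rho$ that keeps the pulled-back ball inside $\mathcal{O}$ uniformly in $r$.

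It remains to pass to the limit exactly as in Lemma \ref{upper}. From $\mu(B(x,r))\le\prod_{\mathbf i|m}p_i$ and $r\le\rho\prod_{\mathbf i|m}\gamma_i$ I get, after dividing by $\log r<0$, the bound $\frac{\log\mu(B(x,r))}{\log r}\ge\frac{\log\prod_{\mathbf i|m}p_i}{\log r}$. Lemma \ref{tau} shows that consecutive return times are asymptotically equal, so $\log\prod_{\mathbf i|m}\gamma_i$ is logarithmically comparable to $\log(r/\rho)$; together with the almost-sure limits $\frac1m\log\prod_{\mathbf i|m}p_i\to\sum_i p_i\log p_i$ and $\frac1m\log\prod_{\mathbf i|m}\gamma_i\to\sum_i p_i\log\gamma_i$ this yields $\liminf_{r\to0}\frac{\log\mu(B(x,r))}{\log r}\ge\underline{s}$, i.e. $\underline{\dim}_{\textup{loc}}\mu(x)\ge\underline{s}$ for every $x\in\mathcal{A}$. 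Since $\mu(\mathcal{A})=1$, (\ref{eq:mu_dimH_lower_2}) gives $\dim_H\mu\ge\underline{s}$.
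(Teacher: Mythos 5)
Your proposal is correct and follows essentially the same route as the paper's proof: return times to a positive-probability set that guarantees a fixed depth inside $\mathcal{O}$, a stopping index at which $\prod_{\mathbf{i}|m}\gamma_{i}$ matches $r$, the circumscription $B(x,r)\subset w_{\mathbf{i}|m}\left(\overline{\mathcal{O}}\right)$ via the co-Lipschitz bound (\ref{eq:lower_lipschitz}), and then (\ref{eq:mu_wi_n_sosc}), Lemma \ref{tau} and (\ref{eq:mu_dimH_lower_2}). The only differences are cosmetic rather than structural: the paper builds its return set $F$ from a fixed prefix $\mathbf{j}|K$ that contracts a large ball $B$ to distance greater than $\delta$ from $\partial\mathcal{O}$ (where you return directly to the $\rho$-interior $\mathcal{O}_{\rho}$, which spares you the exhaustion by the balls $E_{n}$), and you make explicit two points the paper passes over silently, namely the invariance-of-domain argument behind the ball-containment step and the enlargement of $\mathcal{S}$ by $\mathcal{N}(\log\gamma_{i})$.
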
\begin{proof}Fix $\epsilon>0$ and $x_{0}\in\mathbb{X}$.
As before, we choose $R_{\epsilon}$ such that $\mu\left(B\left(x_{0},R_{\epsilon}\right)\right)>1-\epsilon$,
and denote the ball $B\left(x_{0},R_{\epsilon}\right)$ by $B$. Furthermore,
choose some $\mathbf{j}\in\mathcal{S}$ and integer $K$ such that
$\pi\left(\mathbf{j}\right)\in\mathcal{O}$ and $d\left(w_{\mathbf{j}|K}\left(B\right),\partial\mathcal{O}\right)>\delta$
for some sufficiently small $\delta>0$. Moreover, choose $x=\pi(\mathbf{i})\in B\cap\mathcal{A}$
and note that $\mathbf{i}$ is unique (due to lemma \ref{overlap})
given this $x$. Fix $r>0$ and let
\[
F=C_{\mathbf{j}}(K)\cap\left\{ \mathbf{i}\in\mathcal{S}:\,\pi\left(\sigma^{K}(\mathbf{i})\right)\in B\right\} .
\]
Now define 
\[
N=\max\left\{ k:\,\prod_{\mathbf{i}|\tau_{\mathbf{i}}\left(F,k\right)}\gamma_{i}<\frac{r}{\delta}\leq\prod_{\mathbf{i}|\tau_{\mathbf{i}}\left(F,k-1\right)}\gamma_{i}\right\} .
\]
As before, $N$ exists provided that $r$ is small enough. For any
$n$, the map $w_{\mathbf{i}|n}$ satisfies (\ref{eq:lower_lipschitz})
with lower Lipschitz constant $\prod_{\mathbf{i}|n}\gamma_{i}$, so
\begin{eqnarray*}
B(x,r) & \subset & B\left(x,\delta\prod_{\mathbf{i}|\tau_{\mathbf{i}}(F,N-1)}\gamma_{i}\right)\\
 & \subseteq & w_{\mathbf{i}|\tau_{\mathbf{i}}(F,N-1)}\left[B\left(\pi\circ\sigma^{\tau_{\mathbf{i}}(F,N-1)}(\mathbf{i}),\delta\right)\right]\\
 & = & w_{\mathbf{i}|\tau_{\mathbf{i}}(F,N-1)}\left[B\left(w_{\mathbf{j}|K}\circ\pi\circ\sigma^{\tau_{\mathbf{i}}(F,N-1)+K}(\mathbf{i}),\delta\right)\right]\\
 & \subset & w_{\mathbf{i}|\tau_{\mathbf{i}}(F,N-1)}\left(\mathcal{O}\right)\\
 & \subset & w_{\mathbf{i}|\tau_{\mathbf{i}}(F,N-1)}\left(\overline{\mathcal{O}}\right),
\end{eqnarray*}
since, by definition, we have that $B\left(w_{\mathbf{j}|K}(y),\delta\right)\subset\mathcal{O}$
for any $y\in B$. Consequently,
\begin{eqnarray*}
\mu\left(B(x,r)\right) & \leq & \mu\left(w_{\mathbf{i}|\tau_{\mathbf{i}}\left(F,N-1\right)}\left(\overline{\mathcal{O}}\right)\right)=\prod_{\mathbf{i}|\tau_{\mathbf{i}}\left(F,N-1\right)}p_{i},
\end{eqnarray*}
giving

\begin{eqnarray*}
\frac{\log\mu\left(B(x,r)\right)}{\log r} & \geq & \frac{\log\prod_{\mathbf{i}|\tau_{\mathbf{i}}\left(F,N-1\right)}p_{i}}{\log\delta\prod_{\mathbf{i}|\tau_{\mathbf{i}}\left(F,N\right)}\gamma_{i}}.
\end{eqnarray*}
We now arrive at

\[
\begin{aligned}\liminf_{r\rightarrow0} & \frac{\log\mu\left(B(x,r)\right)}{\log r}\geq\\
 & \liminf_{N\rightarrow\infty}\frac{\tau_{\mathbf{i}}\left(F,N-1\right)}{\tau_{\mathbf{i}}\left(F,N\right)}\cdot\frac{\frac{1}{\tau_{\mathbf{i}}\left(F,N-1\right)}\log\prod_{\mathbf{i}\left(\tau_{\mathbf{i}}\left(F,N-1\right)\right)}p_{i}}{\frac{1}{\tau_{\mathbf{i}}\left(F,N\right)}\left(\log\delta+\log\prod_{\mathbf{i}\left(\tau_{\mathbf{i}}\left(F,N\right)\right)}\gamma_{i}\right)}\\
 & =\frac{\sum_{M}p_{i}\log p_{i}}{\sum_{M}p_{i}\log\gamma_{i}}
\end{aligned}
\]
Again, we applied lemma \ref{tau} since $\mathbb{P}(F)=\prod_{\mathbf{j}|K}p_{i}\cdot\mu(B)>0$.
We have now shown that $\underline{\dim}_{\text{loc}}\mu(x)\geq\underline{s}$
for all $x\in B\cap\mathcal{A}$. Similarly to lemma \ref{upper},
let $E_{n}=B\left(x_{0},R_{\epsilon/n}\right)\cap\mathcal{A}$, and
the result follows by (\ref{eq:mu_dimH_lower_2}).\end{proof}

We may also remark that by standard results (see eg. \cite{Moran1998},
theorem 4.4 in \cite{Moran1997} and proposition 2.2 in \cite{Falconer1997})
the following result holds.

\begin{corollary}Under the conditions in theorem \ref{main} we have
that 
\begin{eqnarray*}
\textup{i)} & \mu\textup{ is absolutely continuous w.r.t. }\mathcal{H}^{d}\mbox{\textup{ and }}\mathcal{P}^{d}\textup{ whenever }d<\underline{s}\\
\textup{ii)} & \mu\textup{ is singular w.r.t. }\mathcal{H}^{d}\mbox{\textup{ and }}\mathcal{P}^{d}\textup{ whenever }d>\overline{s}
\end{eqnarray*}
\end{corollary}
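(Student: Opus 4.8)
The plan is to reduce the corollary to the standard density theorems that compare a Radon measure with $\mathcal{H}^{d}$ and $\mathcal{P}^{d}$ through its local densities. For $x\in\mathbb{R}^{d}$ set
\[
\overline{D}^{d}\mu(x)=\limsup_{r\to0}\frac{\mu(B(x,r))}{(2r)^{d}},\qquad\underline{D}^{d}\mu(x)=\liminf_{r\to0}\frac{\mu(B(x,r))}{(2r)^{d}}.
\]
The comparison with Hausdorff measure is governed by the upper density and the comparison with packing measure by the lower density; precisely, the results quoted (\cite{Falconer1997}, \cite{Moran1998}, \cite{Moran1997}) give, for a Borel set $A$ and $0<\lambda<\infty$, the inequalities $\mu(A)\leq 2^{d}\lambda\,\mathcal{H}^{d}(A)$ when $\overline{D}^{d}\mu\leq\lambda$ on $A$, and $\mathcal{H}^{d}(A)\leq 2^{d}\lambda^{-1}\mu(A)$ when $\overline{D}^{d}\mu\geq\lambda$ on $A$, together with the analogous pair of inequalities for $\mathcal{P}^{d}$ phrased in terms of $\underline{D}^{d}\mu$. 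Thus the whole corollary follows once I control these two densities $\mu$-almost everywhere.

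The key observation is that the required density information is already contained in Lemmas \ref{upper} and \ref{lower}, whose proofs establish the pointwise bounds $\overline{\dim}_{\textup{loc}}\mu(x)\leq\overline{s}$ and $\underline{\dim}_{\textup{loc}}\mu(x)\geq\underline{s}$ for $\mu$-almost every $x$ (equivalently one may read these off from Theorem \ref{main} together with (\ref{eq:mu_dimH_lower_2}) and (\ref{eq:mu_dimP_upper_2})). First I would translate these into density statements by an elementary manipulation of the logarithm. If $d<\underline{s}$, then $\underline{\dim}_{\textup{loc}}\mu(x)\geq\underline{s}>d$ for a.e. $x$, and choosing $\beta$ with $d<\beta<\underline{s}$ gives $\mu(B(x,r))<r^{\beta}$ for all small $r$, whence $\mu(B(x,r))/(2r)^{d}\to0$ and $\overline{D}^{d}\mu(x)=0$, so in particular $\underline{D}^{d}\mu(x)=0$. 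Symmetrically, if $d>\overline{s}$, then $\overline{\dim}_{\textup{loc}}\mu(x)\leq\overline{s}<d$ for a.e. $x$, and the same computation yields $\mu(B(x,r))>r^{\beta}$ for all small $r$ with $\overline{s}<\beta<d$, so $\underline{D}^{d}\mu(x)=\infty$, hence $\overline{D}^{d}\mu(x)=\infty$. The point worth stressing is that $\overline{\dim}_{\textup{loc}}\mu(x)<d$ is a statement about \emph{all} sufficiently small $r$, which is precisely why the lower density, and not merely the upper one, blows up; this is what feeds the packing conclusion.

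With these densities in hand the two parts follow from the density theorems. For (i), when $d<\underline{s}$ I would write the $\mu$-full set $\{\overline{D}^{d}\mu=0\}$ as the increasing union of the sets $\{\overline{D}^{d}\mu\leq n\}$; on each of these $\mu(\,\cdot\,)\leq 2^{d}n\,\mathcal{H}^{d}(\,\cdot\,)$, so every $\mathcal{H}^{d}$-null set is $\mu$-null and $\mu\ll\mathcal{H}^{d}$, and the identical argument with $\underline{D}^{d}\mu$ and the packing inequalities gives $\mu\ll\mathcal{P}^{d}$. For (ii), when $d>\overline{s}$ the set $H=\{\overline{D}^{d}\mu=\infty\}$ carries all of $\mu$, while $\mathcal{H}^{d}(H)\leq 2^{d}\lambda^{-1}\mu(H)\leq 2^{d}\lambda^{-1}$ for every $\lambda$ forces $\mathcal{H}^{d}(H)=0$; hence $\mu$ is concentrated on an $\mathcal{H}^{d}$-null set and $\mu\perp\mathcal{H}^{d}$, and likewise $\mu\perp\mathcal{P}^{d}$ via $\underline{D}^{d}\mu=\infty$ and the packing analogue. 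I expect no serious obstacle: the only thing requiring care is bookkeeping of the conventions and normalising constants in the packing density theorem (which uses the lower density and may carry different factors than the Hausdorff version) and checking that finitely many $\mu$-full sets intersect to a $\mu$-full set. The substantive work has already been done in Lemmas \ref{upper} and \ref{lower}, so the corollary is in essence a restatement of those pointwise dimension bounds in the language of densities.
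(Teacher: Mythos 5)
Your proposal is correct and is essentially the paper's approach: the paper offers no proof beyond citing the standard comparison theorems (proposition 2.2 in \cite{Falconer1997}, theorem 4.4 in \cite{Moran1997}, \cite{Moran1998}), and your argument is exactly the standard derivation behind those citations --- converting the $\mu$-a.e.\ local dimension bounds $\underline{\dim}_{\textup{loc}}\mu\geq\underline{s}$ and $\overline{\dim}_{\textup{loc}}\mu\leq\overline{s}$ from Lemmas \ref{upper} and \ref{lower} into statements about the densities $\overline{D}^{d}\mu$, $\underline{D}^{d}\mu$, and then invoking the Hausdorff and packing density theorems on the sets $\{\overline{D}^{d}\mu\leq n\}$, $\{\underline{D}^{d}\mu\leq n\}$ (for absolute continuity when $d<\underline{s}$) and $\{\overline{D}^{d}\mu=\infty\}$, $\{\underline{D}^{d}\mu=\infty\}$ (for singularity when $d>\overline{s}$). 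Your observation that the upper local dimension bound controls $\mu(B(x,r))$ for \emph{all} small $r$, forcing the lower density to blow up and thereby yielding the packing-measure conclusion, is precisely the point that makes part ii) work.
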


\section{Examples in $\mathbb{R}^{2}$}

The following examples demonstrate cases where all the conditions
of Theorem \ref{main} hold. The maps aren't all strictly contracting,
and the SOSC holds, but the fine structure of $\mu$ can still be
argued to be ``fractal''. The invariant measures are visualized
by plotting the points of the Markov chain (the so-called forward
iteration) 
\[
w_{i_{1}}\left(x_{0}\right),w_{i_{2}}\circ w_{i_{1}}\left(x_{0}\right),\ldots,w_{i_{N}}\circ\cdots\circ w_{i_{1}}\left(x_{0}\right)
\]
for some starting point $x_{0}$, where at each step $i_{n}$ is chosen
(i.i.d) according to the given probability vector.

\begin{example}\label{ex1}Consider an IFS on $\mathbb{X}=[0,\infty)\times[0,1]$
given by
\[
w_{i}(x,y)=\left(\begin{array}{cc}
\Gamma_{i} & 0\\
0 & \gamma_{i}
\end{array}\right)\left(\begin{array}{c}
x\\
y
\end{array}\right)+\left(\begin{array}{c}
i-1\\
\sum_{k=1}^{i-1}\gamma_{k}
\end{array}\right)
\]
where
\[
\gamma_{i}=\frac{2^{i-1}}{3^{i}},\;\Gamma_{i}=\gamma_{i}+\frac{i}{10}
\]
for $i=1,2,\ldots$. We define the probabilities by $p_{i}=2^{-i}$. 

Since the maps are affine it is not hard to see that $\Gamma_{i}$
and $\gamma_{i}$ indeed are the upper and lower Lipschitz constants
for $w_{i}$, respectively. In this particular case the maps are expanding
horizontally and contracting vertically. The maps split $\mathbb{X}$
into smaller and smaller slices, increasingly displaced from the origin
horizontally (see figure \ref{fig:ex1}a). It is also apparent from
the same figure that the SOSC is satisfied with $\mathcal{O}=\left(0,\infty\right)\times\left(0,1\right)$.

Furthermore, we have $\Gamma_{i}>1$ for all $i\geq10$, and $\gamma_{i}<1$
for all $i\geq1$. The IFS belongs to $\Xi$ since $\sum_{i=1}^{\infty}p_{i}\log p_{i}\approx-1.39$
and 
\[
\sum_{i=1}^{\infty}p_{i}\Gamma_{i}\approx0.45,\;\sum_{i=1}^{\infty}p_{i}\log\Gamma_{i}\approx-0.805,\;\sum_{i=1}^{\infty}p_{i}\left|w_{i}\left((0,0)\right)\right|\approx1.03,
\]
whereby the conditions (\ref{eq:diacfreed1})-(\ref{eq:diacfreed3})
are all satisfied. By theorem \ref{main} we now have 
\[
0.92<\dim_{H}\mu\leq\dim_{P}^{*}\mu<1.73.
\]
The invariant measure is visualized in figure \ref{fig:ex1}b.\end{example}
\begin{figure}
\begin{centering}
\caption{Example \ref{ex1}\label{fig:ex1}}

\par\end{centering}

\begin{centering}
\subfloat[Schematic overview of the maps]{\includegraphics[width=0.4\paperwidth]{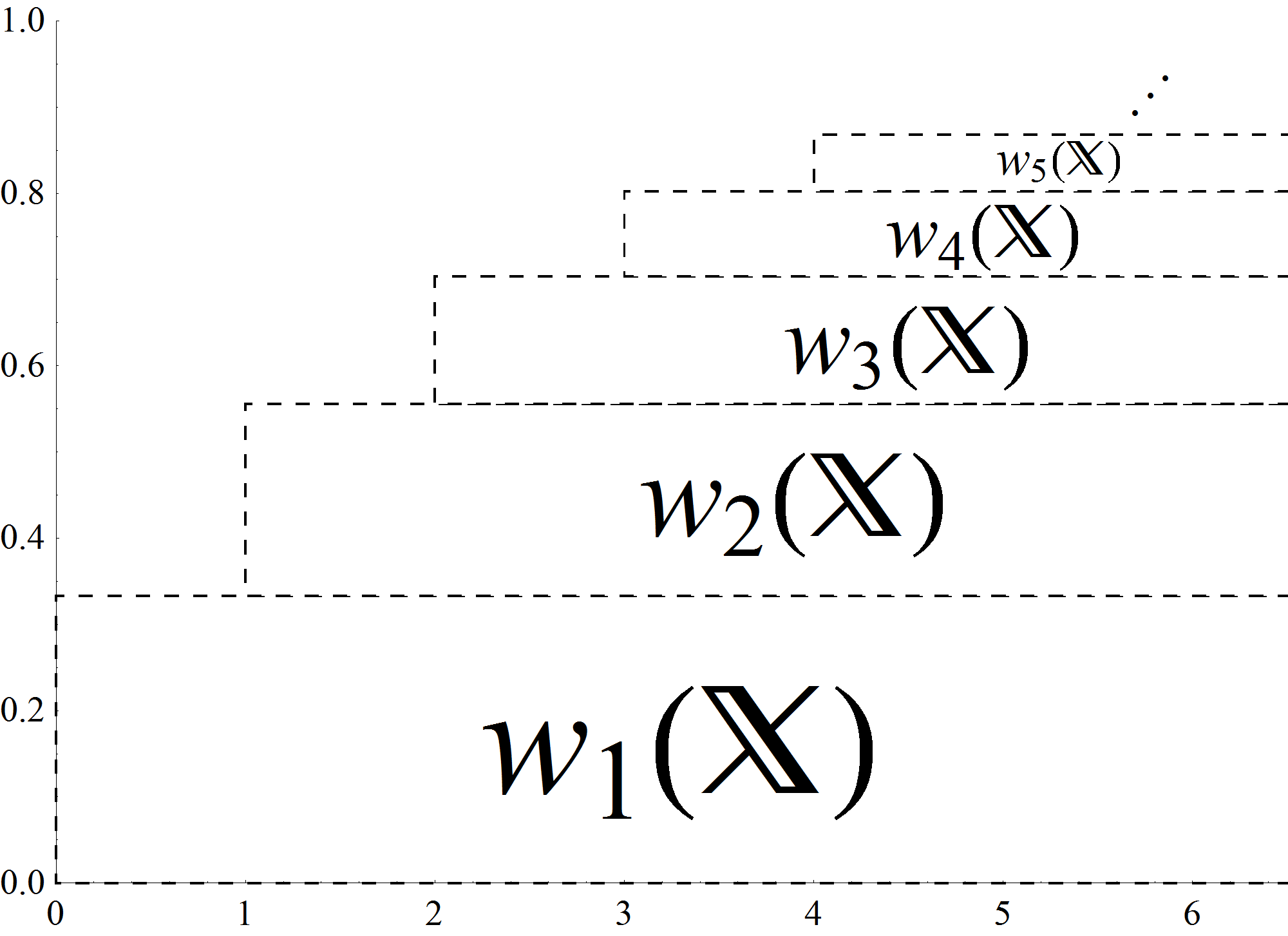}}
\par\end{centering}

\centering{}\subfloat[Plot of the Markov chain]{

\centering{}\includegraphics[width=0.4\paperwidth]{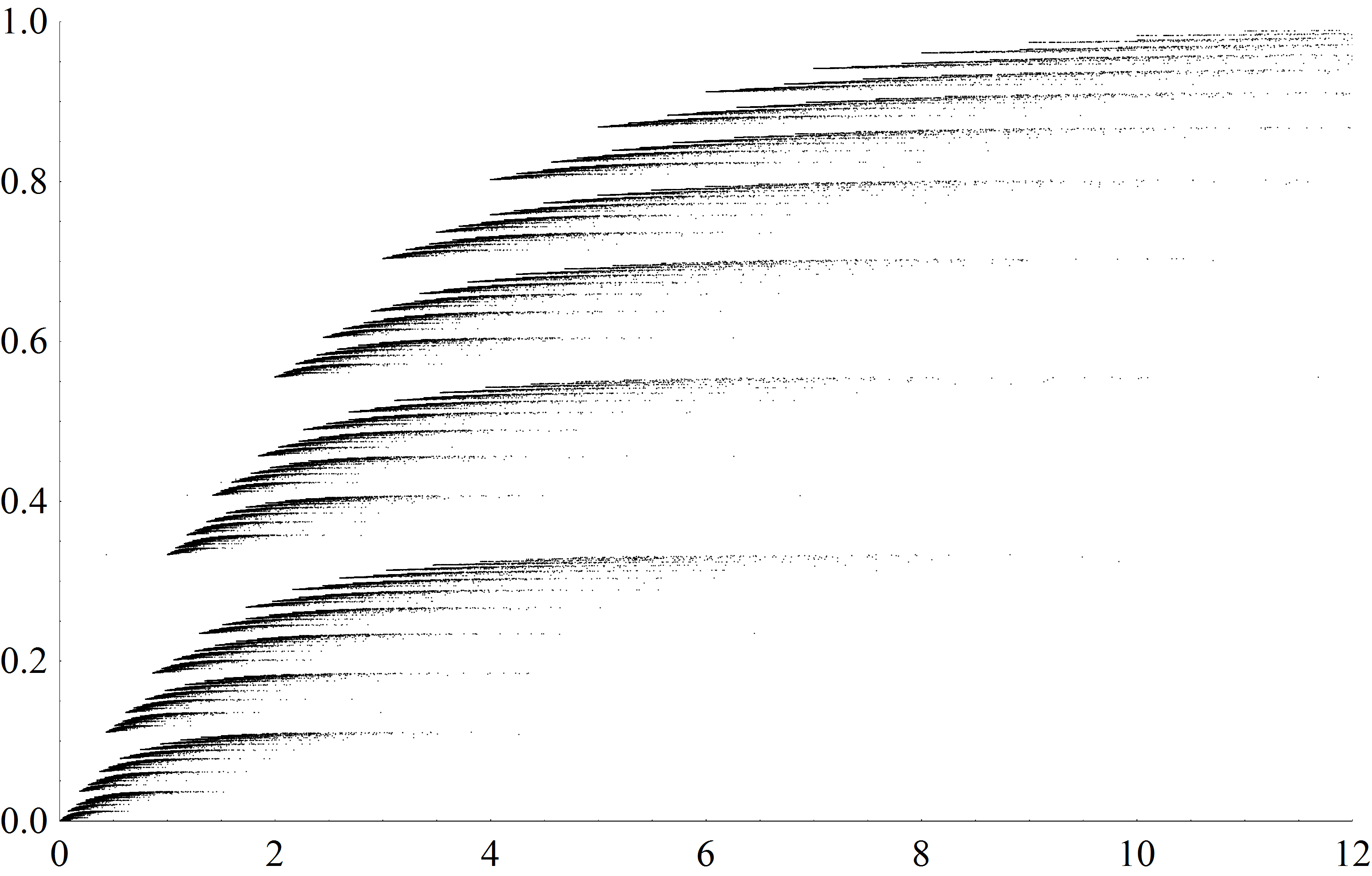}}
\end{figure}
\begin{figure}
\begin{centering}
\caption{Example \ref{ex2}\label{fig:ex2}}

\par\end{centering}

\begin{centering}
\subfloat[Schematic overview of the maps]{\includegraphics[width=0.4\paperwidth]{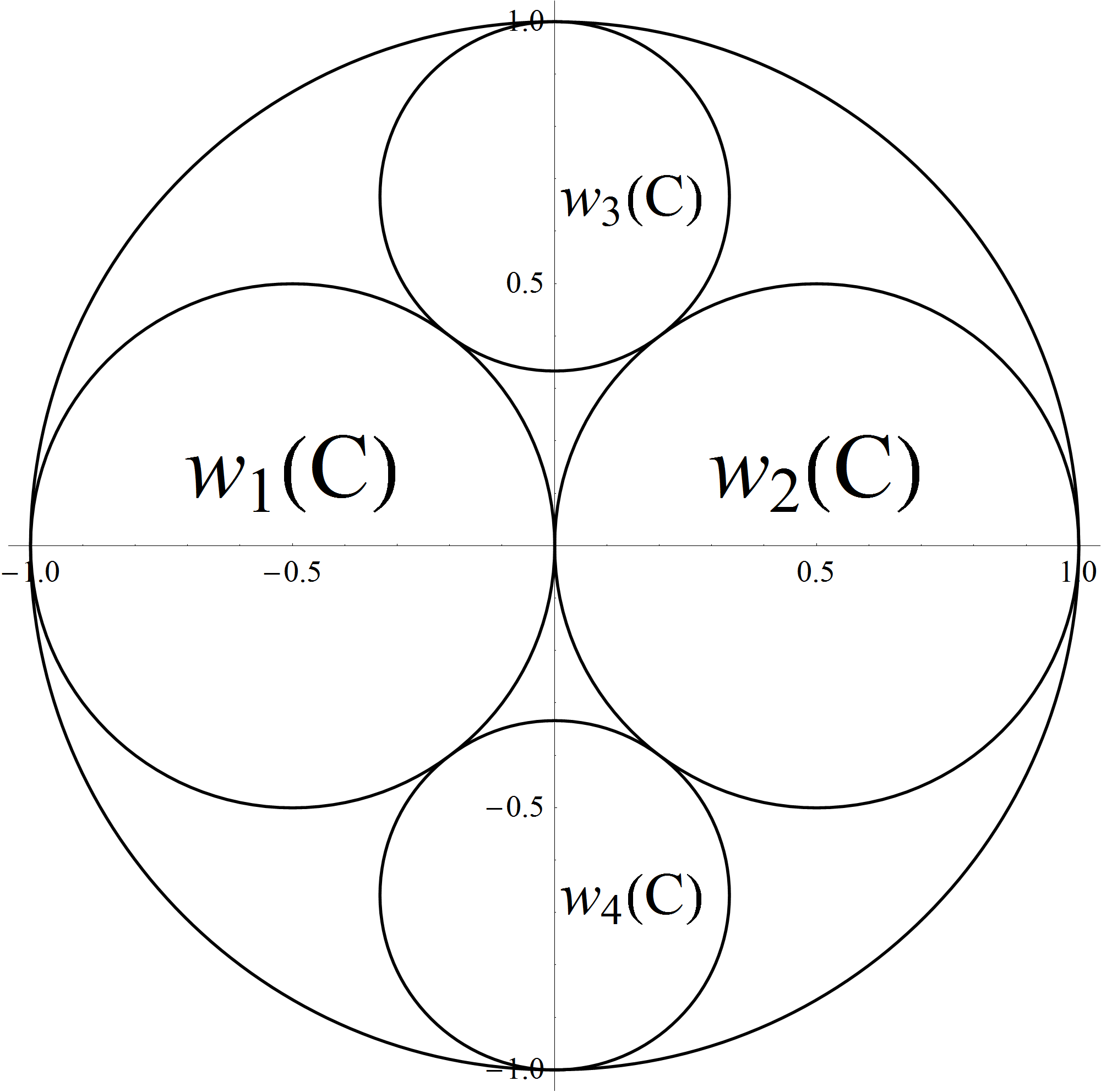}}
\par\end{centering}

\centering{}\subfloat[Plot of the Markov chain]{

\centering{}\includegraphics[width=0.4\paperwidth]{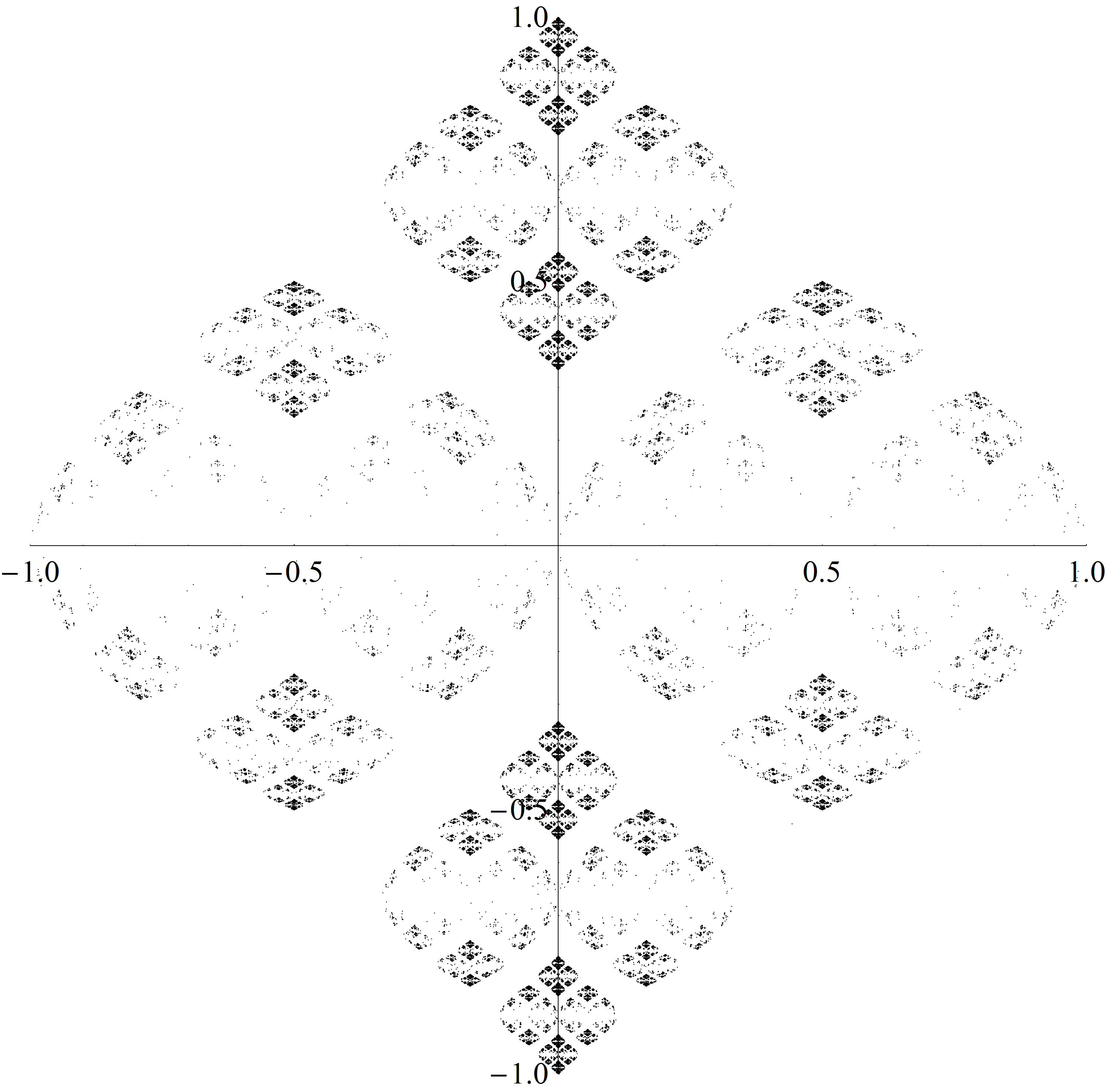}}
\end{figure}

\begin{example}\label{ex2}Define an IFS on the two-dimensional unit
ball $C$, consisting of four maps, by $w_{i}(x,y)=a_{i}\left(\begin{array}{c}
x\\
y
\end{array}\right)+b_{i}$, where 
\[
a_{i}=\begin{cases}
\frac{1}{4}\left(5-3\left(x^{2}+y^{2}\right)^{1/6}\right), & i=1,2\\
\frac{1}{3}, & i=3,4
\end{cases}
\]
and
\[
b_{1}=\left(\begin{array}{c}
-1/2\\
0
\end{array}\right),\,b_{2}=\left(\begin{array}{c}
1/2\\
0
\end{array}\right),\,b_{3}=\left(\begin{array}{c}
0\\
2/3
\end{array}\right).\,b_{4}=\left(\begin{array}{c}
0\\
-2/3
\end{array}\right).
\]
Let $\mathbf{p}=\left\{ 0.1,0.1,0.4,0.4\right\} $. This IFS maps
$C$ into four smaller circles (see the schematic overview in figure
\ref{fig:ex2}). The maps $w_{3}$ and $w_{4}$ are similitudes (ie.
$\Gamma=\gamma=1/3$) while $w_{1}$ and $w_{2}$ expand locally,
around the circle centres, but contract toward the edges. Thus $w_{1}$
and $w_{2}$ are not strict contractions, but the support of the invariant
measure $\mu$ is still bounded and contained in $C$. It can be shown
that $\Gamma_{i}=5/4$ and $\gamma_{i}=1/4$ for $i=1,2$.

The SOSC is satisfied (with $\mathcal{O}$ being the open unit ball),
and we have $\sum p_{i}\log\Gamma_{i}\approx-0.74$ so average contractivity
is fulfilled as well. By theorem \ref{main}, 
\[
1.05<\dim_{H}\mu\leq\dim_{P}^{*}\mu<1.43.
\]
Note that the dimension is strictly smaller than that of the Sierpinski
gasket ($\log3/\log2\approx1.58)$ which would have been achieved
if all the maps were similitudes.\end{example}

\section{Acknowledgements}

I would like to thank Göran Högnäs for comments and suggestions, and
the Finnish Doctoral Programme in Stochastics and Statistics, Academy
of Finland project no. 141318, for financial support.

\bibliographystyle{plain}
\addcontentsline{toc}{section}{\refname}\bibliography{aku}

\begin{thebibliography}{1}

\bibitem{Deliu1991}
Anca Deliu, J~S Geronimo, R~Shonkwiler, and D~Hardin.
\newblock {Dimensions associated with recurrent self-similar sets}.
\newblock {\em Mathematical Proceedings of the Cambridge Philosophical
  Society}, 110:327--336, 1991.

\bibitem{Falconer1997}
Kenneth Falconer.
\newblock {\em {Techniques in Fractal Geometry}}.
\newblock John Wiley \& Sons, 1997.

\bibitem{Freedman1999}
David Freedman and Persi Diaconis.
\newblock {Iterated random functions}.
\newblock {\em SIAM Review}, 41(1):45--76, 1999.

\bibitem{Jaroszewska2008}
Joanna Jaroszewska and Michal Rams.
\newblock {On the Hausdorff dimension of invariant measures of weakly
  contracting on average measurable IFS}.
\newblock {\em Journal of Statistical Physics}, 132(5):907--919, May 2008.

\bibitem{Jordan2008}
Thomas Jordan and Mark Pollicott.
\newblock {The Hausdorff dimension of measures which contract on average}.
\newblock {\em Discrete and Continuous Dynamical Systems}, 22(9):235--246,
  2008.

\bibitem{Moran1997}
Manuel Mor\'{a}n and Jos\'{e}-Manuel Rey.
\newblock {Geometry of self-similar measures}.
\newblock {\em Annales Academiae Scientiarum Fennicae Mathematica},
  22:365--386, 1997.

\bibitem{Moran1998}
Manuel Mor\'{a}n and Jos\'{e}-Manuel Rey.
\newblock {Singularity of self-similar measures with respect to Hausdorff
  measures}.
\newblock {\em Transactions of the American Mathematical Society},
  350(6):2297--2310, 1998.

\bibitem{Myjak2002}
J\'{o}zef Myjak and Tomasz Szarek.
\newblock {On Hausdorff dimension of invariant measures arising from
  non-contractive iterated function systems}.
\newblock {\em Annali di Matematica Pura ed Applicata}, 181(2):223--237, June
  2002.

\bibitem{Nicol2002}
Matthew Nicol, Nikita Sidorov, and David Broomhead.
\newblock {On the fine structure of stationary measures in systems which
  contract-on-average}.
\newblock {\em Journal of Theoretical Probability}, 15(3):715--730, 2002.

\end{thebibliography}

\end{document}